\newcounter{mycounter}
\newtheorem{theorem}{Theorem}[section]
\newtheorem{corollary}[theorem]{Corollary}
\newtheorem{lemma}[theorem]{Lemma}
\newtheorem{proposition}[theorem]{Proposition}
\theoremstyle{definition}
\newtheorem{issue}{Issue}
\theoremstyle{remark}
\newtheorem{remark}[theorem]{Remark}
\begin{document}

\title{A core-free semicovering of the Hawaiian Earring}

\author{Hanspeter Fischer}

\address{Department of Mathematical Sciences\\Ball State University\\Muncie, IN 47306\\U.S.A.}

\email{fischer@math.bsu.edu}

\author{Andreas Zastrow}

\address{Institute of Mathematics, University of Gda\'nsk, ul. Wita Stwosza 57,
80-952 Gda\'nsk, Poland}

\email{zastrow@mat.ug.edu.pl}

\thanks{}

\subjclass[2000]{57M10, 55Q52}

\keywords{Covering space, compact-open topology, Hawaiian Earring}

\date{\today}

\commby{}

\begin{abstract}
The connected covering spaces of a connected and locally path-connected topological space $X$ can be classified by the conjugacy classes of those subgroups of $\pi_1(X,x)$ which contain an open normal subgroup of $\pi_1(X,x)$, when endowed with the natural quotient topology of the compact-open topology on based loops. There are known examples of semicoverings (in the sense of Brazas) that correspond to open subgroups which do not contain an open normal subgroup. We present an example of a semicovering of the Hawaiian Earring $\mathds{H}$ with corresponding open subgroup of $\pi_1(\mathds{H})$ which does not contain {\em any} nontrivial normal subgroup of $\pi_1(\mathds{H})$.
\end{abstract}

 \maketitle

\section{Introduction}

The fundamental group $\pi_1(X,x)$ of a topological space $X$ with base point $x\in X$ carries a natural topology: considering the space $\Omega(X,x)$ of all continuous loops $\alpha:([0,1],\{0,1\})\rightarrow (X,x)$ in the compact-open topology, we equip $\pi_1(X,x)$ with the quotient topology induced by the function $[\,\cdot\,]:\Omega(X,x)\rightarrow \pi_1(X,x)$ which assigns to each loop $\alpha$ its homotopy class $[\alpha]$. It is known that $\pi_1(X,x)$ need not be a topological group, for example, when $X$ is the Hawaiian Earring \cite{Fabel} (see also \cite{Brazas2011}), although left and right multiplication always constitute homeomorphisms \cite{Calcut}. This problem can be circumvented by removing some of the open subsets from the topology and instead giving $\pi_1(X,x)$ the finest {\em group topology} which makes
$[\,\cdot\,]:\Omega(X,x)\rightarrow \pi_1(X,x)$ continuous \cite{Brazas2013}. However, since both topologies share the same open subgroups \cite[Proposition~3.16]{Brazas2013}, we impose the former.

 For a connected and locally path-connected topological space $X$, the topology of $\pi_1(X,x)$ is intimately tied to the existence of covering spaces: $\pi_1(X,x)$ is discrete if and only if  $X$ is semilocally simply-connected \cite{Calcut}. In turn, $X$ is semilocally simply-connected  if and only if $X$ admits a simply-connected covering space,
  in which case the (classes of equivalent) covering projections $p:\widetilde{X}\rightarrow X$ with connected $\widetilde{X}$ are in one-to-one correspondence with the conjugacy classes of all subgroups of $\pi_1(X,x)$ via the monomorphism on fundamental groups induced by the covering projection \cite[\S2.5]{Spanier}.

 It was stated erroneously in \cite[Theorem~5.5]{Biss} that, in general, the connected covering spaces of a connected and locally path-connected topological space $X$ are in one-to-one correspondence with the conjugacy classes of open subgroups of $\pi_1(X,x)$.
 In fact, it was shown recently that the open subgroups correspond to {\em semicoverings} \cite{Brazas2012} and that they correspond to classical coverings if and only if they contain an open normal subgroup \cite{Torabi} (see also \cite{BrazasNote}). A semicovering $p:\widehat{X}\rightarrow X$ is a local homeomorphism that allows for the unique continuous lifting of paths and their homotopies.

  It was observed in \cite{Torabi} that the solution to \cite[\S1.3 Excercise~6]{Hatcher}, as discussed in \cite[Example~3.8]{Brazas2012}, describes an open subgroup of the fundamental group $\pi_1(\mathds{H})$ of the Hawaiian Earring $\mathds{H}$ which does not contain an open normal subgroup and hence does not correspond to a covering space. In this article, we present a more extreme example: an open subgroup $K$ of $\pi_1(\mathds{H})$ which does not contain {\em any} nontrivial normal subgroup of $\pi_1(\mathds{H})$. We find this subgroup by directly constructing a corresponding  semicovering $q:\widehat{\mathds{H}}\rightarrow \mathds{H}$.

In the last section, we briefly sketch a unified proof of both abovementioned correspondence results of \cite{Brazas2012} and \cite{Torabi} for connected and locally path-connected spaces (Corollaries~\ref{B} and \ref{T} below, respectively) from the common perspective of the further generalized covering spaces of \cite{FischerZastrow}. We thereby hope to bring out the subtle difference in the classical construction (as discussed in \cite{Spanier}) of  (semi)covering spaces corresponding to open versus  open {\em normal} subgroups of the fundamental group.

\section{The graph $\widehat{\mathds{H}}$}\label{graph}
Consider the Hawaiian Earring, i.e., the planar space $\mathds{H}=\bigcup_{i\in \mathbb{N}} C_i\subseteq \mathbb{R}^2$ where  $C_i= \{(x,y)\in \mathbb{R}^2\mid x^2+(y-1/i)^2=1/i^2\}$ with base point ${\bf 0}=(0,0)$. (See Figure~\ref{H}.) For each $i\in \mathbb{N}$, consider the parametrization $l_i:[0,1]\rightarrow C_i$ defined by $l_i(t)=((\sin 2\pi t)/i, (1-\cos 2\pi t)/i)$.
\vspace{-15pt}

\begin{figure}[h]\includegraphics{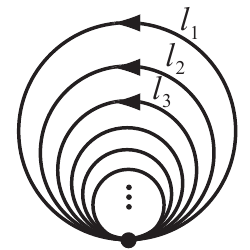}
\vspace{-10pt}

\caption{\label{H} The Hawaiian Earring $\mathds{H}$}
\end{figure}

\vspace{-20pt}

\begin{figure}[h]\includegraphics{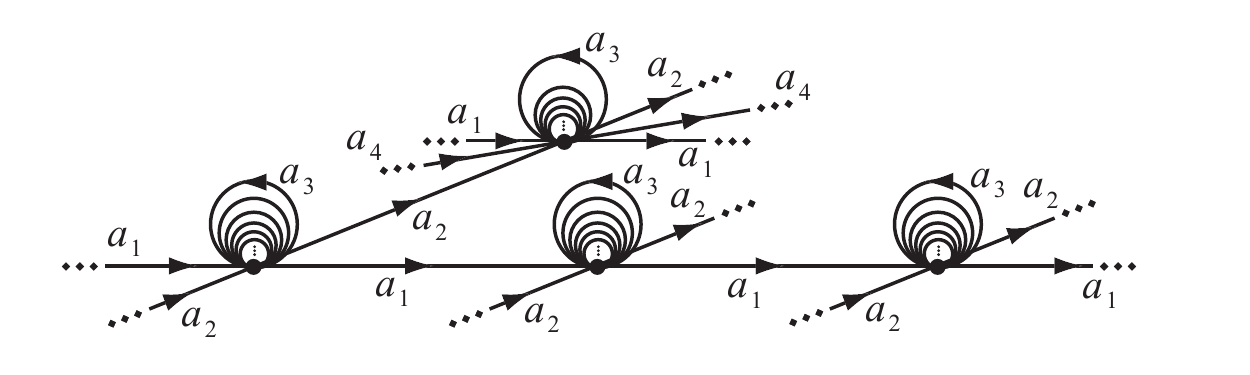}
\vspace{-25pt}

\caption{\label{Hhat} A detail of the graph $\widehat{\mathds{H}}$ with every $a_i$ mapping to $l_i$}
\end{figure}

Before we begin with the construction of the graph $\widehat{\mathds{H}}$, we outline some guiding principles. Since $\widehat{\mathds{H}}$ is to be locally homeomorphic to $\mathds{H}$, all but finitely many edges of $\widehat{\mathds{H}}$ which are incident to any given vertex must be looping edges---we choose to assemble all non-looping edges into a tree. (See Figure~\ref{Hhat}.) This tree, denoted by $\Gamma^\ast$ below, will be constructed in Steps~1--4 as a subtree of the Cayley graph $\Gamma$ for the free group on $\{a_1, a_2, a_3, \cdots\}$. (Step 5 completes the construction of $\widehat{\mathds{H}}$ by attaching the looping edges to $\Gamma^\ast$.) There are two competing demands on the structure of $\Gamma^\ast$.\linebreak On one hand, to secure the desired path lifting property, the branching of $\Gamma^\ast$ must be limited, so as to keep paths in $\widehat{\mathds{H}}$ from running off to infinity along non-looping edges whose images in $\mathds{H}$ form a sequence of circles whose diameters converge to zero. (For example, one would not be able to lift the continuous loop $\ell=(l_1\cdot l_2\cdot l_3\cdots)\cdot(l_1\cdot l_2\cdot l_3\cdots)$ in $\mathds{H}$ to a continuous path $\widehat{\ell}:([0,1],0)\rightarrow (\widehat{\mathds{H}},v)$ if the edge-path $a_1a_2a_3\cdots $ in $\widehat{\mathds{H}}$, starting at vertex $v$, were to contain infinitely many non-looping edges.) On the other hand, for $K$ not to contain any nontrivial normal subgroup of $\pi_1(\mathds{H})$, every essential loop in $\mathds{H}$ must have at least one lift in $\widehat{\mathds{H}}$ which is not a loop. (If there were an essential loop $\ell$ in $\mathds{H}$ with only loops as lifts, then for any path $\alpha$ in $\mathds{H}$ and its reverse $\alpha^-$, all lifts of $\alpha\cdot\ell\cdot \alpha^-$ and of $\alpha\cdot\ell^-\cdot \alpha^-$ would be loops,
 making the normal subgroup generated by the homotopy class of $\ell$, which we may assume to be based at the origin, a nontrivial normal subgroup of $K$.) In fact, as we shall see in the proof of Proposition~\ref{normal} below, it suffices to incorporate into $\Gamma^\ast$ one lift $\widehat{\ell}$ of each essential finite edge-loop $\ell$ in $\mathds{H}$, provided we simultaneously arrange for all (except the largest two) circles of $\mathds{H}$ which are smaller than the smallest circle crossed by $\ell$, to lift to loops at all vertices along $\widehat{\ell}$. Formally, we proceed as follows.

Let $F_\infty$ be the free group on the countably infinite set ${\mathcal A}=\{a_1, a_2, a_3, \dots\}$. Let $\mathcal W_n$ be the set of all finite words over the finite alphabet $\{a_1^{\pm 1}, a_2^{\pm 1}, \dots, a_n^{\pm 1}\}$ and let $\mathcal W=\bigcup_{n=1}^\infty \mathcal W_n$. For $w\in \mathcal W$,  let $w'$ denote the word resulting from completely reducing $w$, using the usual cancellation operations.
 Then the vertex set $V$ of the directed Cayley graph $\Gamma$ for the group $F_\infty$, with respect to the generating set $\mathcal A$, consists of all words $w$ in $\mathcal W$ which are reduced (i.e., $w=w'$) and its directed edge set is given by $E=\{(u,v)\mid u, v\in V, v=(ua_i)'$  for some $i\in \mathbb{N}\}$.  We label the directed edge $(u,v)\in E$  from $u$ to $v=(ua_i)'$ by $a_i$. Note that the underlying undirected graph for $\Gamma$ is a tree all of whose vertices have valence $\aleph_0$. We denote the empty word by $\mathds{1}$ and the length of a word $w\in \mathcal W$ by $|w|$.

Let $\{w_1, w_2, w_3, \dots\}$ be a complete list of all non-empty words in $\mathcal W$. For each $j\in \mathbb{N}$, let \[\widehat{w}_j=a_1a_2a_1a_2a_1\cdots\] be the finite word of length \[|\widehat{w}_j|=2(|w_1|+|w_2|+\cdots + |w_{j-1}|)+3j+|w_j|\] whose letters alternate between $a_1$ and $a_2$. Then $\widehat{w}_j\in V$.

We define the graph $\widehat{\mathds{H}}$ based on $\Gamma$ in five steps:\vspace{5pt}

{\bf Step 1:} Let $Z_j$ be the set of vertices visited by the edge-path in $\Gamma$ which starts at vertex $\widehat{w}_j$ and follows the edges $x_1, x_2, \dots, x_m\in \{a_1^{\pm 1},a_2^{\pm 1}, a_3^{\pm 1},\dots \}$ which appear as the letters of the word $w_j=x_1x_2\cdots x_m$. (Here,  $a_i^{-1}$ means $a_i$ in reverse.) That is,\linebreak\vspace{-20pt}

 \[Z_j=\{u\in V\mid u=(\widehat{w}_jx_1x_2\cdots x_i)' \mbox{ for some } 0\leqslant i \leqslant m\}.\] (Note that the same vertex $u\in Z_j$ might be visited multiple times by this edge-path, for different values of $i$, since $w_j$ need not be a reduced word.)

Consider the set $Z$ of vertices visited by the infinite zig-zag ray in $\Gamma$, which starts at vertex $\mathds{1}$ and follows the alternating edges $a_1, a_2, a_1, a_2, a_1,\dots$. Then $Z$ intersects each $Z_j$ in at least one vertex. For $i<j$, we have $|\widehat{w}_j|-|\widehat{w}_i|\geqslant |w_j|+|w_i|+3$, so that $Z_i$ and $Z_j$ are separated in the tree $\Gamma$ by at least three consecutive edges whose vertices are in $Z$.
Hence, the sets $Z_1, Z_2, Z_3, \dots$ are pairwise disjoint.\vspace{5pt}

{\bf Step 2:} We add vertices to each $Z_j$ along finitely many ``straight lines'' through the vertices of $Z_j$. Specifically, given $w_j=x_1x_2\cdots x_m$, choose $n\geqslant 2$ minimal with $x_i\in \{a_1^{\pm 1}, a_2^{\pm 1},\dots, a_n^{\pm 1}\}$ for all $1\leqslant i\leqslant m$. For each $u\in Z_j$ and each $1\leqslant s \leqslant n$, let $L_{u,s}$ be the set of vertices visited by the two infinite rays of $\Gamma$ that start at vertex $u$ and follow the edges $a_s, a_s, a_s,\dots$ and $a_s^{-1}, a_s^{-1}, a_s^{-1}, \dots$, respectively. That is, \[L_{u,s}=\{v\in V\mid v=(ua_s^r)' \mbox{ and } r\in \mathbb{Z}\}.\] (Note that $L_{u,s}$ might meet $Z_j$ in more than one vertex. Indeed, if $u_1,u_2\in Z_j$ are connected by an edge labeled $a_s$, then $L_{u_1,s}=L_{u_2,s}$.) We define \[Y_j=Z_j\;\cup \bigcup_{u\in Z_j, 1\leqslant s\leqslant n} L_{u,s}.\]

If $Z$ meets any given $L_{u,s}$, then it does so in at most two (adjacent) vertices, at least one of which belongs to the corresponding $Z_j$ with $u\in Z_j$. This fact, when combined with the estimate at the end of Step 1, implies that for $i\neq j$, the sets $Y_i$ and $Y_j$ are separated in the tree $\Gamma$ by some edge with vertices in $Z$. In particular, the sets $Y_1, Y_2, Y_3, \dots$ are pairwise disjoint.
 We define \[Y=Z\cup \bigcup_{j=1}^\infty Y_j.\]

{\bf Step 3:} Starting with $\Gamma_0=\Gamma$, we inductively define a sequence $\Gamma_0,\Gamma_1, \Gamma_2, \Gamma_3, \dots$ of successive subtrees, each of which contains all vertices in $Y$. For $j\in \mathbb{N}$, we let $\Gamma_j$ be the graph obtained from $\Gamma_{j-1}$ by removing all vertices $v$ which are at edge-distance 1 from $Y_j$, unless $v$ is connected to $Y_j$ by an edge labeled $a_1$ or $a_2$ (having $v$ either as terminal or initial vertex), along with all edges incident to $v$ and all vertices and edges of $\Gamma_{j-1}$ which are separated from $Y_j$ in $\Gamma_{j-1}$ by $v$.

Put differently (with $n$ as in Step~2 for $Y_j$), we let $\Gamma_j$ be the graph obtained from $\Gamma_{j-1}$ by removing all vertices $v$ (along with all incident edges) of the (reduced) form \begin{equation}\label{form} v=(v_1v_2\cdots v_t)(v_{t+1}v_{t+2}\cdots v_{t+r})v_{t+r+1}\cdots\end{equation}
\hspace{.7in} \parbox{4.5in}{\hspace{2pt} for which $v_1v_2\cdots v_t\in Z_j$, and $v_1v_2\cdots v_{t+1}\not\in Z_j$, and

\begin{itemize}
\item[either] $\displaystyle \left\{\begin{array}{l}
                 r=0 \mbox{ and} \\
                 v_{t+1}=v_{t+r+1}=a_k^{\pm 1} \mbox{ for some } k>n
               \end{array}\right.$\vspace{5pt}

\item[or \hspace{5pt} ] $\displaystyle \left\{\begin{array}{l}
                r>0 \mbox{ and}\\
                v_{t+1}v_{t+2}\cdots v_{t+r}=a_s^{\pm r} \mbox{ for some } 1\leqslant s\leqslant n \mbox{ and} \\
                 v_{t+r+1}=a_k^{\pm 1} \mbox{ for some } k\notin\{1,2,s\}
                 \end{array}\right.$
\end{itemize}}

\begin{remark}\label{starremark} \hspace{5pt} \begin{itemize}
\item[(i)] The expression $(v_1v_2\cdots v_t)(v_{t+1}v_{t+2}\cdots v_{t+r})v_{t+r+1}\cdots$ in Formula (\ref{form}) is  a finite word of length at least $t+r+1$, which begins according to the specified conditions and continues in any way that forms an overall reduced word.
\item[(ii)]
 We briefly verify the correctness of Formula (\ref{form}): For $u\in Z_j$ and $1\leqslant s\leqslant n$, $L_{u,s}\cap Z_j=\{u_1, u_2,\dots,u_\ell\}$ is a finite set (containing $u$) of consecutively adjacent vertices (in the tree $\Gamma$), separating $L_{u,s}\setminus Z_j$ into sets $L_{u,s}^+$ and $L_{u,s}^-$ (distinguished by direction) with $u_1$ and $u_\ell$ adjacent to $L_{u,s}^-$ and $L_{u,s}^+$, respectively. Each $u_i$ appears in  (\ref{form}) as some $v_1v_2\cdots v_t$ with $r=0$, and each vertex in $L_{u,s}^+$ (resp. $L_{u,s}^-$) appears as some $v_1v_2\cdots v_{t+r}$ with $r>0$, where $v_1v_2\cdots v_t=u_\ell$ (resp. $=u_1$) and $v_{t+1}v_{t+2}\cdots v_{t+r}=a_s^{+r}$  (resp.  $=a_s^{-r}$).\linebreak As for the corresponding range of $k$ in the equation $v_{t+r+1}=a_k^{\pm 1}$, note that $u_i\in L_{u_i,k}$ for $1\leqslant k \leqslant n$ while $L_{u,s}^{\pm}\cap L_{\mu,k}=\emptyset$ for $\mu\in Z_j$ and $k\neq s$.
\item[(iii)] This procedure never removes a vertex $v\in Y_i$ with $i\neq j$, for if $Y_i$ is connected to $Y_j$ by an edge of $\Gamma$, then this edge is labeled $a_1$ or $a_2$ by Step 2.\linebreak \vspace{-10pt}

    \end{itemize} Hence, $\Gamma_j$ is a (connected) subtree of $\Gamma_{j-1}$ containing all vertices in $Y$.

\end{remark}

  {\bf Step 4:} At this point,  a vertex $v$ of the tree $\bigcap_{j=1}^\infty \Gamma_j$ has finite valence if and only if $v\in \bigcup_{j=1}^\infty Y_j$;  all other vertices are still incident with the same edges as in $\Gamma$.\linebreak We now prune back to a further subtree, $\Gamma^\ast$, which still contains the vertices in $Y$, leaves the valence of every vertex  $v\in \bigcup_{j=1}^\infty Y_j$ unchanged, but is such that all other vertices have valence 4 and are incident only with edges labeled $a_1$ or $a_2$.

   Specifically, we let $\Gamma^\ast$ be the graph obtained from $\bigcap_{j=1}^\infty \Gamma_j$ by removing all vertices $v$ (along with all incident edges) of the (reduced) form $v=ua_k^{\pm 1}w$, where $u$ represents a vertex of $\bigcap_{j=1}^\infty \Gamma_j$ of infinite valence, $k\not\in \{1,2\}$ and $w\in \mathcal W$.

Due to the symmetry of our construction, for every vertex $v$ of $\Gamma^\ast$, there is a finite subset $E_v\subseteq \mathbb{N}$ such that
the directed edges of $\Gamma^\ast$ terminating in $v$ are labeled by the elements of the set $\{a_i\mid i\in E_v\}$ and the directed edges of $\Gamma^\ast$ emanating from $v$ are also labeled by the elements of the set $\{a_i\mid i\in E_v\}$.\vspace{5pt}

{\bf Step 5:} For each vertex $v$ of $\Gamma^\ast$ and each $i\in\mathbb{N}\setminus E_v$, we add one additional directed edge $(v,v)$ to $\Gamma^\ast$ which loops from $v$ back to $v$, and label it by $a_i$. The geometric realization of the resulting graph will be denoted by $\widehat{\mathds{H}}$ and will be given a natural non-CW (metrizable) topology in the next section. We choose the vertex $\mathds{1}\in \widehat{\mathds{H}}$ as the base point.\vspace{5pt}

Edge-paths in $\widehat{\mathds{H}}$ are understood to be edge-paths in the underlying undirected graph, i.e., directed edges may be traversed in both directions. Upon specifying a starting vertex, edge-paths are represented by elements of ${\mathcal W}$, whose letters indicate which edges are traversed in what order and direction. We record for later reference:

\begin{lemma}\label{islands} Let $\widehat{Y}_j$ be the (connected) subgraph of $\widehat{\mathds{H}}$ spanned by the vertices in $Y_j$.\linebreak
Then the collection $\{\widehat{Y}_j\mid j\in \mathbb{N}\}$ is pairwise disjoint and has the following properties:
\begin{list}{\em (\arabic{mycounter})}{\usecounter{mycounter}}\setlength{\itemindent}{-15pt}
\item\label{base} $\mathds{1}\not\in \widehat{Y}_j$ for all $j$.
\item\label{large} If $v$ is a vertex of $\widehat{\mathds{H}}$ with $v\not\in \widehat{Y}_j$ for all $j$, then $E_v=\{1,2\}$.
\item\label{small} If $v$ is a vertex of $\widehat{Y}_j$ and $w_j\in {\mathcal W}_n$ $(n\geqslant 2)$, then $\{1,2\}\subseteq E_v\subseteq \{1,2,\dots,n\}$.
\item \label{separate} Every edge-path in $\widehat{\mathds{H}}$ from $\widehat{Y}_j$ to $\widehat{\mathds{H}}\setminus \widehat{Y}_j$ contains a label $a_1^{\pm 1}$ or $a_2^{\pm 1}$.
\item \label{core} The graph $\widehat{Y}_j$ contains the edge-path which starts at vertex $\widehat{w}_j$ and follows the letters of the word $w_j$; this edge-path lies entirely in the subtree $\Gamma^\ast$.
\end{list}
\end{lemma}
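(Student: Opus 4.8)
The plan is to extract all five properties, together with the disjointness and connectedness of the $\widehat Y_j$, directly from the construction in Steps~1--5. The organizing principle is that, since both Step~3 and Step~4 delete edges only as a byproduct of deleting a vertex, an edge of $\Gamma$ lies in $\Gamma^\ast$ if and only if both of its endpoints do; and all vertices of $Y=Z\cup\bigcup_jY_j$ survive into $\Gamma^\ast$, as recorded in Steps~3--4. Pairwise disjointness of $\{\widehat Y_j\}$ is then immediate from the separation proved at the end of Step~2: for $i\neq j$ the sets $Y_i$ and $Y_j$ are separated in $\Gamma$ by an edge with both endpoints in $Z$, so no $\Gamma$-edge joins them and (the Step~5 loops being based at single vertices) the spanned subgraphs share neither vertices nor edges. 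Connectedness of each $\widehat Y_j$ follows because $Z_j$ is the vertex set of the $w_j$-edge-path, whose consecutive vertices are adjacent, and each line $L_{u,s}$ is attached to $Z_j$ at $u$; the edges realizing these adjacencies survive by the endpoint criterion, as confirmed by property~(\ref{core}).

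I would then dispatch (\ref{base}), (\ref{large}), the upper bound in (\ref{small}), (\ref{separate}), and (\ref{core}) as readings of the construction. For (\ref{base}): appending the at most $|w_j|$ letters of $w_j$ to the reduced zig-zag word $\widehat w_j$ cancels at most $|w_j|$ of its letters, so every vertex of $Z_j$ retains an initial block $a_1a_2a_1\cdots$ of length $|\widehat w_j|-|w_j|\geqslant 3j\geqslant 3$; such a vertex involves both $a_1$ and $a_2$ and so is not a power of any single generator, whence no line $L_{u,s}$ through $u\in Z_j$ meets $\mathds 1$, and $\mathds 1\notin Y_j$. Property~(\ref{large}) merely restates the first two sentences of Step~4: a vertex outside every $Y_j$ has infinite valence in $\bigcap_j\Gamma_j$ and is pruned to valence $4$ with only its $a_1^{\pm1},a_2^{\pm1}$ edges, giving $E_v=\{1,2\}$. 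For the upper bound in (\ref{small}) and for (\ref{separate}) I would invoke Formula~(\ref{form}), justified in Remark~\ref{starremark}(ii): at $u\in Z_j$ the labels $a_k$ with $k>n$ are deleted, and at a vertex of $L_{u,s}\setminus Z_j$ the labels $a_k$ with $k\notin\{1,2,s\}$ are deleted; hence every surviving label at a $Y_j$-vertex lies in $\{1,\dots,n\}$, and since the surviving $a_s$-edges with $3\leqslant s\leqslant n$ at $Y_j$-vertices stay within the lines $L_{u,s}\subseteq Y_j$, the only edges leaving $\widehat Y_j$ are labeled $a_1$ or $a_2$. Finally (\ref{core}) follows from the endpoint criterion: the $w_j$-edge-path has all its vertices in $Z_j\subseteq Y$ and uses only non-loop edges, so it survives into $\Gamma^\ast$ and lies in $\widehat Y_j$.

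The step I expect to be the main obstacle is the reverse inclusion $\{1,2\}\subseteq E_v$ in (\ref{small}), i.e.\ showing that no $a_1^{\pm1}$ or $a_2^{\pm1}$ edge at a $Y_j$-vertex is ever deleted. By the endpoint criterion this reduces to proving that the four $a_1,a_2$-neighbors of any $v\in Y_j$ survive into $\Gamma^\ast$. Such a neighbor $v'$ is kept by the preservation clause of Step~3 when $Y_j$ itself is processed, so the two remaining dangers are the processing of a different island $Y_i$ and the Step~4 pruning. For the former I would use Remark~\ref{starremark}(iii) together with the $Z$-separation of Step~2: the branch of $\Gamma$ containing $Y_j$ (hence also its immediate $a_1,a_2$-neighbor $v'$) is attached to $Y_i$ through the $a_1,a_2$-scaffolding of $Z$, so the deletion of the non-$a_1,a_2$ branches around $Y_i$ never reaches $v'$. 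For the latter I would use that Step~4 removes a vertex only when its reduced word passes an infinite-valence vertex and then continues with a label $a_k$, $k>2$; since a reduced representative of $v'$ is either $v$ with an $a_1$ or $a_2$ appended (and $v\in Y_j$ has finite valence) or a prefix of $v$, no forbidden step occurs, so $v'$ survives Step~4 whenever $v$ does---which it does. Carefully confining the branch-deletions of Steps~3--4 by the separation estimates of Steps~1--2 is the crux; the remaining assertions are bookkeeping.
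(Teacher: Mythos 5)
Your proposal is correct and takes essentially the same route as the paper's (much terser) proof: both read the five properties directly off Steps 1--5, computing $E_u=\{1,\dots,n\}$ for $u\in Z_j$, $E_v=\{1,2,s\}$ on the lines $L_{u,s}$, and $E_v=\{1,2\}$ off the islands, with property~(\ref{separate}) then following from these label sets and Remark~\ref{starremark}(iii). The care you devote to the inclusion $\{1,2\}\subseteq E_v$ --- verifying that the $a_1,a_2$-neighbors of island vertices survive the pruning both when other islands $Y_i$ are processed in Step~3 and under Step~4 --- is precisely the content the paper compresses into ``By Steps 2 and 3 \dots\ By Step 4,'' and your verification of it is sound.
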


\begin{proof}
First observe that $Y_j$ is the vertex set of $\widehat{Y}_j$. Items (\ref{base}) and (\ref{core}) are clear.

 To prove (\ref{large}) and (\ref{small}), let  $w_j=x_1x_2\cdots x_m$ and choose $n\geqslant 2$ minimal with $x_i\in \{a_1^{\pm 1}, a_2^{\pm 1},\dots, a_n^{\pm 1}\}$ for all $1\leqslant i\leqslant m$. By Steps 2 and 3, every vertex $u\in Z_j$ has $E_u=\{1,2,\cdots, n\}$ and every vertex $v\in L_{u,s}$ with $u\in Z_j$, $1\leqslant s\leqslant n$ and $v\not\in Z_j$ has $E_v=\{1,2,s\}$. By Step 4, every vertex $v\not\in \bigcup_{i=1}^\infty Y_i$  has $E_v=\{1,2\}$.

 Item (\ref{separate}) follows from (\ref{large}) and Remark~\ref{starremark}(iii).
\end{proof}

\section{The map $q:\widehat{\mathds{H}}\rightarrow \mathds{H}$}\label{map}

 We define the following subsets of $C_i\subseteq \mathds{H}$:
\begin{eqnarray*}
V_i^-&=&l_i([0,3/8))\\
V_i^+&=&l_i((5/8,1])\\U_i&=&l_i((1/4,3/4))
\end{eqnarray*}
We also define the following subsets of $\mathds{H}$:
\[U_{n+1}^\infty=\bigcup_{i=1}^n (V_i^-\cup V_i^+)\cup \bigcup_{i=n+1}^\infty C_i\]
 Then, for every $n\in \mathbb{N}$, $\{U_1, U_2, \dots, U_n,U_{n+1}^\infty\}$ is an open cover of $\mathds{H}$. Note that $U_{m+1}^\infty\supseteq U_{n+1}^\infty$ if $m\leqslant n$ and that  $U_i\cap U_j=\emptyset$ for $i\neq j$.

 Let $e$ be a directed edge (or loop) of $\widehat{\mathds{H}}$, labeled $a_i$, with corresponding parametrization $\psi_e:[0,1]\rightarrow e\subseteq \widehat{\mathds{H}}$. We define the following subsets of $e$:
 \begin{eqnarray*}
 V_e^-&=&\psi_e([0,3/8))\\
 V_e^+&=&\psi_e((5/8,1])\\
  U_e&=&\psi_e((1/4,3/4))
    \end{eqnarray*}
    Accordingly, we obtain bijections $q_e:U_e\rightarrow U_i$, $q_e^-:V_e^-\rightarrow V_i^-$ and $q_e^+:V_e^+\rightarrow V_i^+$ by composing $l_i$ with the inverse of the respective restriction of $\psi_e$.

For each vertex $v$ of $\widehat{\mathds{H}}$, let $U_v$ be the union of all $V_e^-$, where $e$ ranges over all edges of $\widehat{\mathds{H}}$ (including loops) that emanate from $v$,  together with all $V_e^+$, where $e$ ranges over all edges of $\widehat{\mathds{H}}$ (including loops) that terminate in $v$,
together with all entire loops $e$ at $v$ that are labeled $a_i^{\pm 1}$ with $i>n$, where $n\geqslant 2$ is chosen minimal with $E_v\subseteq \{1,2,\dots,n\}$. Let $q_v:U_v\rightarrow U_{n+1}^\infty$ be the unique bijection which agrees with all $q_e$, $q_e^-$ and $q_e^+$, where defined. Note that $q_v(v)={\bf 0}$. (See Figure~\ref{U}.)

\begin{figure}[h]\includegraphics{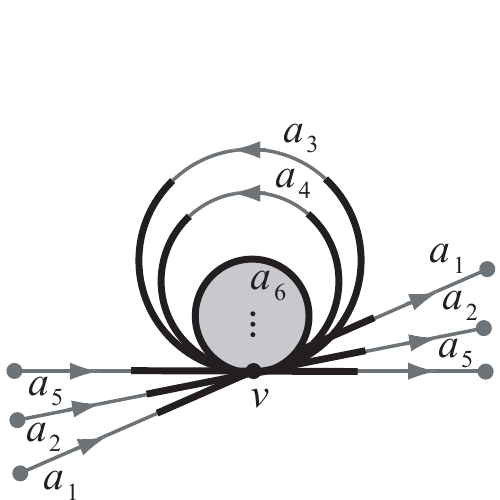} \includegraphics{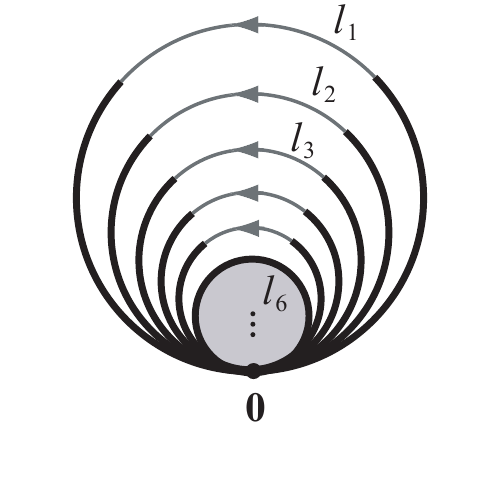}
\vspace{-15pt}

\caption{\label{U} $q_v:U_v\rightarrow U_6^\infty$ with $E_v=\{1,2,5\}$}
\end{figure}

 The collection $\{U_e\mid e$ an edge of $\widehat{\mathds{H}}\}$ is pairwise disjoint, and the same is true for $\{U_v\mid v$ a vertex of $\widehat{\mathds{H}}\}$; together these two collections cover $\widehat{\mathds{H}}$. Moreover, $q_e|_{U_e\cap U_v}=q_v|_{U_e\cap U_v}$ for all $e$ and $v$. Hence, we may
 define a function $q:\widehat{\mathds{H}}\rightarrow \mathds{H}$ by $q(\widehat{x})=q_e(\widehat{x})$ if $\widehat{x}\in U_e$ for some $e$ and $q(\widehat{x})=q_v(\widehat{x})$ if $\widehat{x}\in U_v$ for some $v$.

 We endow $\widehat{\mathds{H}}$ with the unique topology that makes every $U_e$ and every $U_v$ open and which makes every $q|_{U_e}=q_e:U_e\rightarrow U_i$ and every $q|_{U_v}=q_v:U_v\rightarrow U_{n+1}^\infty$ a homeomorphism. In particular, we have the following:

 \begin{proposition}\label{local}
 $q:\widehat{\mathds{H}}\rightarrow \mathds{H}$ is a local homeomorphism.
 \end{proposition}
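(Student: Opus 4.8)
The plan is to verify directly that $q$ is a local homeomorphism by exhibiting, around each point $\widehat{x} \in \widehat{\mathds{H}}$, an open neighborhood that $q$ maps homeomorphically onto an open subset of $\mathds{H}$. By the very definition of the topology on $\widehat{\mathds{H}}$, every $U_e$ and every $U_v$ is open and each restriction $q|_{U_e} = q_e$ and $q|_{U_v} = q_v$ is a homeomorphism onto $U_i$ or $U_{n+1}^\infty$, respectively. Since $U_i$ is open in $\mathds{H}$ for each $i$, and since $\{U_1, \dots, U_n, U_{n+1}^\infty\}$ is an open cover of $\mathds{H}$ (so in particular each $U_{n+1}^\infty$ is open), these images are genuinely open in $\mathds{H}$. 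Thus the real content of the proposition is that the collection $\{U_e\} \cup \{U_v\}$ actually covers $\widehat{\mathds{H}}$ and that $q$ is continuous; once these are in place, continuity of $q$ together with the homeomorphism property on each chart of an open cover yields the local homeomorphism claim immediately.

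First I would confirm the covering statement, which is asserted in the text just before the proposition: the sets $U_e = \psi_e((1/4,3/4))$ account for the open ``middle'' of every edge, while each $U_v$ gathers together the half-open initial segments $V_e^-$ of edges emanating from $v$, the half-open terminal segments $V_e^+$ of edges terminating at $v$, and the small entire loops at $v$. A point in the interior of an edge lies in the appropriate $U_e$; a point at or near a vertex $v$ (i.e.\ in an end-segment $\psi_e([0,3/8))$ or $\psi_e((5/8,1])$, or on a small loop) lies in $U_v$. The only parameter values left uncovered by a single $U_e$ would be those in $[0,1/4] \cup [3/4,1]$, and these are precisely the segments collected into the $U_v$ at the two endpoints of $e$. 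Hence every point of $\widehat{\mathds{H}}$ lies in some chart, and both collections are pairwise disjoint as noted.

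Next I would establish continuity of $q$. The natural approach is to use that $\widehat{\mathds{H}}$ carries the \emph{unique} topology making each $U_e$ and $U_v$ open and each $q_e$, $q_v$ a homeomorphism; with respect to this topology, $q$ restricted to each member of the open cover $\{U_e\} \cup \{U_v\}$ is continuous (indeed a homeomorphism onto its open image), so $q$ is continuous by the pasting lemma for open covers. The compatibility condition $q_e|_{U_e \cap U_v} = q_v|_{U_e \cap U_v}$, already recorded in the text, is exactly what guarantees that these locally defined homeomorphisms glue to a well-defined global map $q$ and that the gluing is consistent on overlaps. Thus $q$ is well-defined, continuous, and an open map (being a homeomorphism on each chart of an open cover).

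The main obstacle — to the extent there is one — is not any single deep step but rather the bookkeeping needed to see that the charts genuinely overlap correctly and cover $\widehat{\mathds{H}}$ with no gaps, and that $q$ is locally injective despite each vertex $v$ having its incident edge-ends $V_e^\pm$ all mapped into the single overlapping region $U_{n+1}^\infty$. This is handled by the construction of $q_v$ as the \emph{unique} bijection agreeing with all the $q_e^\pm$, which encodes the fact that distinct incident edges at $v$ (labeled by distinct indices) are sent to the distinct circle-segments $V_i^\pm$ and the distinct circles $C_i$ with $i > n$ comprising $U_{n+1}^\infty$. Since each $U_v \to U_{n+1}^\infty$ is declared a homeomorphism and $U_{n+1}^\infty$ is an open subset of $\mathds{H}$, local injectivity at vertices is automatic, and no further argument is required. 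The proposition therefore follows formally from the definition of the topology together with the covering and compatibility observations already assembled in the text.
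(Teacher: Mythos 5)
Your proposal is correct and takes essentially the same route as the paper: the paper offers no separate proof at all, since the proposition is presented as an immediate consequence (``in particular'') of the declared topology on $\widehat{\mathds{H}}$, in which each $U_e$ and $U_v$ is open and each $q|_{U_e}=q_e$ and $q|_{U_v}=q_v$ is a homeomorphism onto an open subset of $\mathds{H}$. What you add---the covering of $\widehat{\mathds{H}}$ by the charts, the openness of $U_i$ and $U_{n+1}^\infty$ in $\mathds{H}$, and the overlap compatibility $q_e|_{U_e\cap U_v}=q_v|_{U_e\cap U_v}$---is a faithful unpacking of facts already recorded in the text immediately before the proposition.
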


\begin{proposition}\label{path}
For every continuous path $f:([0,1],0)\rightarrow (\mathds{H},{\bf 0})$ there is a unique continuous lift $\widehat{f}:([0,1],0)\rightarrow (\widehat{\mathds{H}},\mathds{1})$ such that $q\circ \widehat{f}=f$.
\end{proposition}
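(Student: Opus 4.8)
The plan is to prove the two assertions separately. \emph{Uniqueness} will follow from soft general principles: since $q$ is a local homeomorphism (Proposition~\ref{local}) and $\widehat{\mathds{H}}$ is Hausdorff (indeed metrizable), for any two lifts $\widehat{f}_1,\widehat{f}_2$ of $f$ with $\widehat{f}_1(0)=\widehat{f}_2(0)=\mathds{1}$ the set $\{t\in[0,1]\mid \widehat{f}_1(t)=\widehat{f}_2(t)\}$ is open (by local injectivity of $q$), closed (by Hausdorffness), and nonempty, hence all of $[0,1]$ by connectedness.

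For \emph{existence} I would argue by a continuity/supremum method rather than a Lebesgue-number partition, because $\mathds{H}$ fails to be semilocally simply connected at ${\bf 0}$ and the neighborhoods $U_{n+1}^\infty$ are not evenly covered in the classical sense. Let $A=\{t\in[0,1]\mid f|_{[0,t]}$ admits a continuous lift starting at $\mathds{1}\}$. This set is down-closed (lifts restrict) and, by uniqueness, its lifts are compatible, so they assemble into a single lift $\widehat{f}$ on $[0,\sup A)$; it then suffices to prove that $T:=\sup A$ lies in $A$ and equals $1$. Local extension at an already-lifted parameter $t<1$ is the easy part. If $f(t)\ne {\bf 0}$, then near $f(t)$ the space $\mathds{H}$ is an open arc of a single $C_i$ on which $q$ is a genuine local homeomorphism, so $\widehat{f}$ extends. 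If $f(t)={\bf 0}$, so that $\widehat{f}(t)$ is a vertex $v$, then $f$ maps a neighborhood of $t$ into the fixed neighborhood $U_{n+1}^\infty$ of ${\bf 0}$ (with $n$ minimal for $E_v\subseteq\{1,\dots,n\}$), and I set $\widehat{f}:=q_v^{-1}\circ f$ there, which matches the old lift at $t$ because $q_v^{-1}({\bf 0})=v$. The same two cases show $T\in A$ once the single genuinely new point is established.

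That new point — the main obstacle — is to show that when $f(T)={\bf 0}$ the limit $\lim_{t\to T^-}\widehat{f}(t)$ exists. Writing $[0,T)\setminus f^{-1}({\bf 0})$ as a union of ``excursion'' intervals, on each of which $f$ traverses a single circle $C_i$, the lift either runs around a loop at its current \emph{anchor vertex} (when $i\notin E_v$) or walks along the line $L_{v,i}$ of $a_i$-edges (when $i\in E_v$, which by the label symmetry of Step~4 is entirely present in $\Gamma^\ast$); the anchor vertex changes exactly when an excursion is a loop of nonzero degree, which forces $f$ to reach the antipode $l_i(1/2)$, at distance $2/i$ from ${\bf 0}$. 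The crux is that only finitely many anchor-changing excursions can accumulate at $T$. Indeed, if infinitely many did, their labels $i$ could not be bounded (a fixed bound $M$ would put $f$ at distance $\geqslant 2/M$ from ${\bf 0}$ at times accumulating at $T$, contradicting $f(T)={\bf 0}$); but an edge labeled $a_i$ with $i\geqslant 3$ lies strictly inside some island $\widehat{Y}_j$ with $i\leqslant n$ by Lemma~\ref{islands}(\ref{large}),(\ref{small}), so unbounded labels would require passing through infinitely many distinct islands, and by Lemma~\ref{islands}(\ref{separate}) each such passage drives the lift across an edge labeled $a_1$ or $a_2$, i.e. puts $f$ at distance $\geqslant 1$ from ${\bf 0}$ at times accumulating at $T$ — again a contradiction.

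Once the anchor vertex is seen to stabilize at some $v$ on an interval $(T-\delta,T)$, I would shrink $\delta$ so that $f((T-\delta,T))\subseteq U_{n+1}^\infty$; then every excursion there is either a loop at $v$ on a circle $C_i$ with $i>n$ or a partial incursion into an end $V_e^-$ or $V_e^+$ of an $a_i$-edge at $v$ with $i\leqslant n$, so $\widehat{f}$ stays inside $U_v$. On the connected set $(T-\delta,T)$ the restriction $q|_{U_v}$ is injective, whence $\widehat{f}=q_v^{-1}\circ f$ there, and therefore $\lim_{t\to T^-}\widehat{f}(t)=q_v^{-1}({\bf 0})=v$. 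Setting $\widehat{f}(T)=v$ yields a continuous lift on $[0,T]$, so $T\in A$; combined with the local extension above, this forces $T=1$ and completes the construction.
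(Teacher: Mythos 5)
Your argument is correct, but it takes a genuinely different route from the paper's. The paper proves existence by a two-level finite-subdivision (Lebesgue-number) argument: first a partition of $[0,1]$ subordinate to the cover $\{U_1,U_2,U_3^\infty\}$, then, on any subinterval mapping into $U_3^\infty$ whose lift sits at a vertex of an island $\widehat{Y}_i$ with $w_i\in{\mathcal W}_n$, at most one further partition subordinate to $\{U_3,\dots,U_n,U_{n+1}^\infty\}$; Lemma~\ref{islands}(\ref{small}) and (\ref{separate}) guarantee that this single refinement suffices, since the lift cannot exit $\widehat{Y}_i$ while $f$ avoids the antipodal halves of $C_1$ and $C_2$. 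You instead lift by continuous induction, and your crux---convergence of $\widehat{f}(t)$ as $t\to T^-$ when $f(T)={\bf 0}$---is settled by the excursion/anchor analysis: an anchor change on $C_i$ forces $f$ through the antipode $l_i(1/2)$ at distance $2/i$ from ${\bf 0}$, and the bounded/unbounded-label dichotomy (bounded labels give points at distance $\geqslant 2/M$ accumulating at $T$; unbounded labels force passage through infinitely many islands and hence, by Lemma~\ref{islands}(\ref{separate}), crossings of $a_1$- or $a_2$-edges at distance $\geqslant 1$) rules out infinitely many anchor changes accumulating at $T$. So both proofs rest on exactly the same structural facts about $\widehat{\mathds{H}}$, deployed differently. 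The paper's scheme buys two things your version does not deliver directly: the explicit finite chain of sheets $q_{v_k}(U_{v_k})$, $q_{e_k}(U_{e_k})$ along a partition, which the proof of Proposition~\ref{open} quotes verbatim to build the open set $W$ in the compact-open topology, and a format that transfers essentially unchanged to square homotopies in Proposition~\ref{homotopy}, where your one-dimensional excursion bookkeeping would be awkward to replicate. Your scheme buys a sharper conceptual picture: it isolates the only possible failure mode of path lifting for this local homeomorphism (accumulation of anchor changes at a preimage of ${\bf 0}$) and exhibits the island design as precisely the device that forbids it, without any appeal to uniform continuity.

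Two loose ends you should patch, both routine. (i) The degenerate case of a terminal excursion, i.e.\ $f\neq{\bf 0}$ on some interval $(a,T)$: here ``the anchor stabilizes on $(T-\delta,T)$'' is vacuous, and the limit need not be the anchor $\widehat{f}(a)$ itself but may be the far endpoint of the edge being traversed; the lift lies in one open edge $\psi_e((0,1))$, and the coordinate $l_i^{-1}\circ f$ converges to $0$ or $1$ by the same antipode/intermediate-value argument, so the limit is a vertex. (ii) In the main case, choose the left endpoint of your final interval to be an anchor time; otherwise an excursion straddling $T-\delta$ could place the lift near the far endpoint of its edge, outside $U_v$, and the identity $\widehat{f}=q_v^{-1}\circ f$ would fail on that initial piece. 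Also, your claim that an $a_i$-edge with $i\geqslant 3$ lies inside a single island uses Lemma~\ref{islands}(\ref{separate}) (to put both endpoints in the same $\widehat{Y}_j$) in addition to (\ref{large}) and (\ref{small}). None of these affects the soundness of the approach.
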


\begin{proof} We only need to show the existence of $\widehat{f}$, since uniqueness follows from Proposition~\ref{local} and the fact that $\widehat{\mathds{H}}$ is Hausdorff. Choose a partition $0=t_0< t_1< t_2 < \cdots < t_m=1$ of $[0,1]$ such that each $f([t_i,t_{i+1}])$ lies in one of $U_1$, $U_2$ or $U_3^\infty$. Combining  subintervals, if necessary, we may assume that $f([t_{2k},t_{2k+1}])\subseteq U_3^\infty$ and $f([t_{2k+1},t_{2k+2}])\subseteq U_{i_k}$ for all $k$.

By Lemma~\ref{islands}, parts (\ref{base}) and (\ref{large}), $E_\mathds{1}=\{1,2\}$ so that $q_\mathds{1}:U_\mathds{1}\rightarrow U_3^\infty$ is a homeomorphism. Since $f([0,t_1])\subseteq U_3^\infty$, we may define $\widehat{f}|_{[0,t_1]}=q_{\mathds{1}}^{-1}\circ f|_{[0,t_1]}$.

Since $f(t_1)\in U_1\cup U_2$, there is a unique edge $e$ of $\widehat{\mathds{H}}$ with $\widehat{f}(t_1)\in U_e$. Then $f([t_1,t_2])\subseteq q_e(U_e)$, so that we may define $\widehat{f}|_{[t_1,t_2]}=q_{e}^{-1}\circ f|_{[t_1,t_2]}$.
Since $f(t_2)\in (U_1\cup U_2)\cap U_3^\infty$, we have $\widehat{f}(t_2)\in V_e^-\cup V_e^+$. Hence, there is a unique vertex $v$ of $\widehat{\mathds{H}}$ such that $\widehat{f}(t_2)\in U_v$. We now define $\widehat{f}$ on $[t_2,t_3]$.

If $E_v=\{1,2\}$,  then $q_v:U_v\rightarrow U_3^\infty$ is a homeomorphism and we may define $\widehat{f}|_{[t_2,t_3]}=q_{v}^{-1}\circ f|_{[t_2,t_3]}$, since $f([t_2,t_3])\subseteq U_3^\infty$.
Otherwise, by Lemma~\ref{islands}(\ref{large}), we have $v\in \widehat{Y}_i$ for some $i$. In this case,  we choose $n$ such that $w_i\in {\mathcal W}_n$ ($n\geqslant 2$). Then $E_u\subseteq \{1,2,\dots,n\}$ for all vertices $u\in \widehat{Y}_i$, by Lemma~\ref{islands}(\ref{small}), so that $ U_{n+1}^\infty\subseteq q_{u}(U_u)$ for all vertices $u\in \widehat{Y}_i$, due to the minimality condition in the definition of $q_u$. Accordingly, we  choose a partition $t_2=t_2^0<t_2^1<t_2^2<\cdots < t_2^r=t_3$ of $[t_2,t_3]$ such that each  $f([t_2^i,t_2^{i+1}])$ lies in one of $U_3, U_4, \cdots, U_n, U_{n+1}^\infty$. Combining subintervals, if necessary, we may assume that $f([t_2^{2s},t_2^{2s+1}])\subseteq U_{n+1}^\infty$ and $f([t_2^{2s+1},t_2^{2s+2}])\subseteq U_{j_s}$ for all $s$.  Lemma~\ref{islands}(\ref{separate}) allows us now to iteratively define $\widehat{f}|_{[t_2, t_3]}$ by $\widehat{f}|_{[t_2^{2s},t_2^{2s+1}]}=q_{v_s}^{-1}\circ f|_{[t_2^{2s},t_2^{2s+1}]}$ and $\widehat{f}|_{[t_2^{2s+1},t_2^{2s+2}]}=q_{e_s}^{-1}\circ f|_{[t_2^{2s+1},t_2^{2s+2}]}$ with edges $e_s$ that form an edge-path in $\widehat{\mathds{H}}$ through vertices $v_s\in \widehat{Y}_i$.

Processing the remaining subintervals of the partition $0=t_0< t_1 < \cdots < t_m=0$ in the same way, i.e., possibly once further subdividing each $[t_{2s},t_{2s+1}]$, we arrive at the desired lift $\widehat{f}$.
\end{proof}

\begin{proposition}\label{homotopy}
For every continuous homotopy $F:([0,1]^2,(0,0))\rightarrow (\mathds{H},{\bf 0})$ there is a unique continuous lift $\widehat{F}:([0,1]^2,(0,0))\rightarrow (\widehat{\mathds{H}},\mathds{1})$ such that $q\circ \widehat{F}=F$.
\end{proposition}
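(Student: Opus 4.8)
The plan is to reduce the statement to the path lifting of Proposition~\ref{path}, defining $\widehat{F}$ one horizontal slice at a time and then proving joint continuity of the resulting function. Uniqueness is immediate: since $q$ is a local homeomorphism (Proposition~\ref{local}) and $\widehat{\mathds{H}}$ is Hausdorff, the set on which two lifts of $F$ agree is open and closed, hence all of the connected square $[0,1]^2$ as soon as they agree at $(0,0)$. For existence, I would first record that the argument proving Proposition~\ref{path} in fact lifts an arbitrary path $f:[0,1]\to\mathds{H}$ uniquely to a path starting at any prescribed point of $q^{-1}(f(0))$: the partition into pieces landing in $U_1$, $U_2$, $U_3^\infty$, with a further subdivision inside an island $\widehat{Y}_i$, never used the starting vertex $\mathds{1}$ beyond $E_\mathds{1}=\{1,2\}$, and it works verbatim from any starting point. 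Using this, I would lift the left edge $s\mapsto F(0,s)$ to a continuous path $s\mapsto\widehat{F}(0,s)$ starting at $\mathds{1}$, and then for each fixed $s$ lift the horizontal slice $t\mapsto F(t,s)$ to the unique path $t\mapsto\widehat{F}(t,s)$ starting at $\widehat{F}(0,s)$. By construction $q\circ\widehat{F}=F$ and $\widehat{F}(0,0)=\mathds{1}$, so everything rests on continuity.

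The continuity argument is the heart of the matter, and here lies the main obstacle. The classical homotopy lifting theorem subdivides $[0,1]^2$ using a single open cover of the base by evenly covered sets together with a Lebesgue number; this tool is unavailable for $q$, because the innermost set $U_{n+1}^\infty$ fails to be evenly covered — a vertex $v$ with large associated $n(v)$ contributes, via $q_v:U_v\to U^\infty_{n(v)+1}$, only an incomplete sheet over $U_{n+1}^\infty$. To circumvent this I would work slice by slice and exploit compactness upstairs rather than a cover downstairs. Fix $s_0$ and set $\sigma=\widehat{F}(\cdot,s_0)$, a continuous, hence compact, path in $\widehat{\mathds{H}}$. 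Since the sets $U_e$ and $U_v$ cover $\widehat{\mathds{H}}$ and $q$ restricts to a homeomorphism onto an open subset of $\mathds{H}$ on each of them, finitely many, say $W_1,\dots,W_p$, cover the image of $\sigma$. Choosing a partition $0=r_0<\cdots<r_q=1$ subordinate to the cover $\{\sigma^{-1}(W_k)\}$ of $[0,1]$, each $\sigma([r_l,r_{l+1}])$ lies in a single sheet $W_{k(l)}$, so $F([r_l,r_{l+1}]\times\{s_0\})\subseteq q(W_{k(l)})$. As each $q(W_{k(l)})$ is open, the tube lemma yields a $\delta>0$ with $F([r_l,r_{l+1}]\times(s_0-\delta,s_0+\delta))\subseteq q(W_{k(l)})$ for every $l$.

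I would then show, by induction on $l$, that for $|s-s_0|<\delta$ and $t\in[r_l,r_{l+1}]$ one has $\widehat{F}(t,s)=(q|_{W_{k(l)}})^{-1}(F(t,s))$. The induction starts from the continuity of the left-edge lift $\widehat{F}(0,\cdot)$ at $s_0$, which places $\widehat{F}(0,s)$ in $W_{k(0)}$ for small $\delta$; uniqueness of path lifts then forces the slice lift $\widehat{F}(\cdot,s)$ to coincide on $[r_0,r_1]$ with the honest sheet-lift $(q|_{W_{k(0)}})^{-1}\circ F(\cdot,s)$. At each junction $r_l$ the formula already established on the left shows $\widehat{F}(r_l,s)\to\sigma(r_l)\in W_{k(l)}$ as $s\to s_0$, so for small $\delta$ the lift re-enters the next sheet $W_{k(l)}$ and uniqueness propagates the formula across $[r_l,r_{l+1}]$. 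Granting this, $\widehat{F}$ agrees near any point $(t_0,s_0)$ with the continuous map $(q|_{W_{k(l)}})^{-1}\circ F$ (and, at a shared endpoint $t_0=r_l$, with both adjacent such maps, which coincide there), and is therefore continuous.

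The delicate point I expect to wrestle with is precisely this sheet-matching step: guaranteeing that the nearby slice lifts do not jump to a parallel sheet lying over the same portion of $\mathds{H}$. What makes it go through is that each individual slice is compact and hence meets only finitely many of the genuine homeomorphic sheets $U_e$, $U_v$, so the non-uniformity of the vertex maps $q_v$ never enters the continuity estimate — all of the combinatorial control over the islands $\widehat{Y}_i$ (Lemma~\ref{islands}) has already been spent inside the path lifting of Proposition~\ref{path}, and need not be revisited here.
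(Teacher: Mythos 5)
Your proof is correct, but it takes a genuinely different route from the paper's. The paper re-runs the one-dimensional argument of Proposition~\ref{path} in two dimensions: it subdivides $[0,1]^2$ into rectangles $A$ with $F(A)$ contained in one of $U_1$, $U_2$, $U_3^\infty$, merges these into components of constant value (after a refinement ruling out corner-only contacts between like-valued rectangles), and lifts component by component, subdividing the $U_3^\infty$-components once more exactly as in the island step of Proposition~\ref{path}; consistency rests on the observation that each new component meets the already-lifted region in exactly one full boundary component. You instead use Proposition~\ref{path} as a black box: after extending it to arbitrary starting points, you define $\widehat{F}$ slice by slice and prove joint continuity by a compactness/tube-lemma argument combined with an induction driven by uniqueness of path lifts. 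That continuity argument is sound, and notably it uses no special feature of $\widehat{\mathds{H}}$: it shows in general that a local homeomorphism with Hausdorff domain for which all paths lift, from all initial points, automatically has continuous lifting of homotopies. This modularity is what your approach buys---the island combinatorics of Lemma~\ref{islands} is spent once, inside Proposition~\ref{path}, and your argument in effect sharpens Remark~\ref{Hausdorff}, whose hypothesis on the existence of homotopy lifts becomes redundant. What the paper's approach buys is a self-contained induction establishing existence and consistency simultaneously, with no need for the arbitrary-basepoint generalization. One caveat on that generalization: it is routine but not quite ``verbatim''---if the prescribed initial point lies in some $\widehat{Y}_i$ or in the interior of an edge, the lift must begin with the finer subdivision into $U_3,\dots,U_n,U_{n+1}^\infty$ rather than with $q_{\mathds{1}}^{-1}$; this works because parts (\ref{large})--(\ref{separate}) of Lemma~\ref{islands} are available at every vertex, not just at $\mathds{1}$, and you should say so explicitly.
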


\begin{proof}
The proof is essentially the same as that of Proposition~\ref{path}. We begin with a partition $0=t_0< t_1< t_2 < \cdots < t_m=1$ of $[0,1]$ such that for each subdivision rectangle $A=[t_i,t_{i+1}]\times [t_j,t_{j+1}]\subseteq [0,1]^2$, there is an element $U_A\in \{U_1, U_2, U_3^\infty\}$ with $F(A)\subseteq U_A$. Subdividing further, if necessary, we may assume that if $A$ and $B$ are two subdivision rectangles of $[0,1]^2$ such that $A\cap B=\{z\}$ is a singleton and such that $U_A=U_B$, then there is a third subdivision rectangle $C$ of $[0,1]^2$ with $A\neq C\neq B$, $A\cap C\cap B=\{z\}$ and $U_A=U_C=U_B$. This allows us to combine the subdivision rectangles into components of constant $U_A$-value, separated by pairwise disjoint  edge-paths in the subdivision grid of $[0,1]^2$.
Analogous to the proof of Proposition~\ref{path}, we start with the component that contains $(0,0)$ and lift one neighboring component at a time, possibly once further subdividing the rectangles of those components with $U_A$-value equal to $U_3^\infty$. While these components might be nested,
the iterative lifting process can be carried out consistently, because each new neighboring component of constant $U_A$-value meets the already lifted region $R$ in exactly one complete  component of the topological boundary of $R$ in $[0,1]^2$. (Note that no two components with distinct constant $U_A$-value from the set $\{U_1, U_2, U_3, U_4,\cdots\}$ are adjacent.)
\end{proof}

\begin{remark}
It is evident from Proposition~\ref{local} and the proofs of Propositions~\ref{path} and \ref{homotopy} that  $q:\widehat{\mathds{H}}\rightarrow \mathds{H}$ is a semicovering in the sense of \cite{Brazas2012}. (See also Remark~\ref{Hausdorff}.)
\end{remark}

\section{The core-free open subgroup $K$ of $\pi_1(\mathds{H},{\bf 0})$}\label{subgroup}

 By Propositions~\ref{path} and \ref{homotopy}, we may define \[K=\{[\alpha]\in \pi_1(\mathds{H},{\bf 0})\mid \widehat{\alpha}(1)=\mathds{1}\}=q_{\#}(\pi_1(\widehat{\mathds{H}},\mathds{1})).\]

The following proposition follows from the fact that $q:\widehat{\mathds{H}}\rightarrow \mathds{H}$ is a semicovering (cf. Initial Step of \cite[Theorem~5.5]{Brazas2012}). For completeness and for later reference, we include a direct proof.

\pagebreak

\begin{proposition}\label{open}
  $K$ is open in $\pi_1(\mathds{H},{\bf 0})$.
\end{proposition}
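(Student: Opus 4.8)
The plan is to show that $K$ contains an open neighborhood of the identity in $\pi_1(\mathds{H},{\bf 0})$, and then invoke the fact (stated in the introduction, citing \cite{Calcut}) that left and right multiplication are homeomorphisms of $\pi_1(\mathds{H},{\bf 0})$, so that a subgroup which is a neighborhood of the identity is automatically open. Thus it suffices to produce an open set $\mathcal{U}\subseteq \Omega(\mathds{H},{\bf 0})$ in the compact-open topology, containing at least one null-homotopic loop, whose image $[\mathcal{U}]$ under the quotient map lies in $K$; by definition of the quotient topology this exhibits a neighborhood of a point of $K$, and after translating by an element of $K$ we obtain a neighborhood of the identity inside $K$.

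The key construction is to exploit the covering-like structure near the base point. By Lemma~\ref{islands}, parts (\ref{base}) and (\ref{large}), the vertex $\mathds{1}$ satisfies $E_\mathds{1}=\{1,2\}$, so $q_\mathds{1}:U_\mathds{1}\to U_3^\infty$ is a homeomorphism onto the open set $U_3^\infty$. First I would let $\mathcal{U}$ be the set of all loops $\alpha\in\Omega(\mathds{H},{\bf 0})$ whose entire image lies in $U_3^\infty$. This $\mathcal{U}$ is open in the compact-open topology, being the subbasic set $M(K_0,U_3^\infty)$ with $K_0=[0,1]$, and it contains the constant loop. For any such $\alpha$, the unique lift $\widehat{\alpha}$ supplied by Proposition~\ref{path} stays inside $U_\mathds{1}$, because $q_\mathds{1}^{-1}\circ\alpha$ is a continuous lift agreeing with $\widehat{\alpha}$ by uniqueness; since $q_\mathds{1}^{-1}$ maps ${\bf 0}$ to $\mathds{1}$ and $U_\mathds{1}$ is a single homeomorphic sheet, the endpoint $\widehat{\alpha}(1)=q_\mathds{1}^{-1}(\alpha(1))=q_\mathds{1}^{-1}({\bf 0})=\mathds{1}$. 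Hence $[\alpha]\in K$, so $[\mathcal{U}]\subseteq K$.

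Having shown $[\mathcal{U}]\subseteq K$, the quotient map $[\,\cdot\,]$ is continuous, but to conclude openness I want $[\mathcal{U}]$ itself to be open, which requires that $[\,\cdot\,]^{-1}([\mathcal{U}])$ be open rather than merely that $\mathcal{U}$ be open. I would therefore argue instead that $K$ is open by showing $[\,\cdot\,]^{-1}(K)$ is open in $\Omega(\mathds{H},{\bf 0})$: given any loop $\alpha$ with $[\alpha]\in K$, i.e.\ $\widehat{\alpha}(1)=\mathds{1}$, the compactness of $[0,1]$ and the local-homeomorphism property (Proposition~\ref{local}) let me cover the lift $\widehat{\alpha}([0,1])$ by finitely many of the basic open sheets $U_e, U_v$, producing a basic compact-open neighborhood of $\alpha$ consisting of loops whose lifts start at $\mathds{1}$, track $\widehat{\alpha}$ sheet-by-sheet, and therefore terminate at the same endpoint $\mathds{1}$, using uniqueness of lifts (Proposition~\ref{path}) and the fact that a loop close to $\alpha$ has each segment landing in the same sheet.

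The main obstacle I anticipate is the endpoint-tracking argument: I must ensure that a loop $\beta$ sufficiently close to $\alpha$ in the compact-open topology not only has its lift remaining near $\widehat{\alpha}$ but actually returns to the \emph{same} vertex $\mathds{1}$, and this is delicate precisely because $\widehat{\mathds{H}}$ carries the non-CW metrizable topology rather than a locally compact one, so the sheets $U_v$ accumulate and "nearness" must be measured carefully. The resolution is that the lift of $\beta$ visits the same finite sequence of sheets as that of $\alpha$, with matching transition vertices; since each transition vertex is determined by which sheet $U_e$ or $U_v$ the lift enters, and these are forced by the compact-open constraint that $\beta$ stay in the $q$-images of the chosen sheets, the terminal vertex is rigidly $\mathds{1}$. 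This is exactly the "Initial Step" argument for semicoverings referenced before the proposition, and I expect the direct verification to amount to a careful Lebesgue-number selection along the finite sheet-cover of $\widehat{\alpha}([0,1])$.
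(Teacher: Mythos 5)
Your closing argument is, in substance, the paper's own proof: the paper also fixes $\alpha$ with $\widehat{\alpha}(1)=\mathds{1}$, takes the partition $0=t_0<t_1<\cdots<t_m=1$ and the sheet sequence produced in the proof of Proposition~\ref{path}, and declares the compact-open neighborhood $W=\bigcap_{k} S([t_{2k},t_{2k+1}],q_{v_k}(U_{v_k}))\cap S([t_{2k+1},t_{2k+2}], q_{e_k}(U_{e_k}))$, asserting, as you do, that every $\beta\in W$ lifts through the same sheets as $\alpha$ and hence ends at $\mathds{1}$. Your opening gambit (a neighborhood of the identity plus homogeneity of $\pi_1$) is a false start, but you identified its defect yourself---the image of an open set under the quotient map need not be open---and discarded it, so it does no harm.

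However, the step you describe as ``forced by the compact-open constraint that $\beta$ stay in the $q$-images of the chosen sheets'' is not in fact forced by those constraints alone, and this is a genuine gap (one which, in fairness, the paper's own wording shares). For a non-loop edge $e$ labeled $a_i$ incident to a vertex $v$, the intersection of images $q(U_e)\cap q(U_v)$ consists of \emph{two} arcs, $l_i((1/4,3/8))\cup l_i((5/8,3/4))$, whereas the image of the intersection $q(U_e\cap U_v)$ is only \emph{one} of them; which arc contains $\beta(t_j)$ at a transition time $t_j$ determines whether the lift of $\beta$ continues into $e$ or into the \emph{other} $a_i$-labeled tree edge at $v$, and interval constraints cannot distinguish the two, since a path may pass through ${\bf 0}$ inside $q_{v}(U_{v})$ and switch arcs. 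Concretely: take $\alpha=l_1\cdot l_1^{-}$, whose class is trivial and hence lies in $K$, with partition $0<0.15<0.35<0.65<0.85<1$; then $\beta=l_1\cdot l_1$ satisfies every interval constraint defining the resulting $W$, yet its lift runs $\mathds{1}\rightarrow a_1\rightarrow a_1a_1$ and is not a loop, so $[\beta]\notin K$. The repair is cheap and is exactly the adjustment the paper itself mentions parenthetically in its final section: intersect $W$ additionally with the point constraints $S(\{t_j\},q(U\cap U'))$ for the two consecutive sheets $U,U'$ meeting at each transition time $t_j$. These pin $\beta(t_j)$ into the correct arc, the piecewise formulas $q_{v_k}^{-1}\circ\beta$ and $q_{e_k}^{-1}\circ\beta$ then agree at the subdivision points, and uniqueness of lifts (Proposition~\ref{local} plus Hausdorffness) shows the lift of $\beta$ does track $\widehat{\alpha}$ sheet-by-sheet and terminate at $\mathds{1}$. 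With this amendment your argument---and the paper's---is complete.
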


\begin{proof}
Let $h:\Omega(\mathds{H},{\bf 0})\rightarrow \pi_1(\mathds{H},{\bf 0})$ denote the quotient map. We wish to show that $h^{-1}(K)$ is open in $\Omega(\mathds{H},{\bf 0})$. To this end, let $\alpha\in h^{-1}(K)$. Then $[\alpha]=h(\alpha)\in K$, so that $\widehat{\alpha}(1)=\mathds{1}$. By the proof of Proposition~\ref{path}, there is a partition $0=t_0<t_1<t_2<\cdots < t_m=1$ of $[0,1]$, and edges $e_i$ in $\widehat{\mathds{H}}$ forming an edge-path through vertices $v_i$ such that $\alpha([t_{2k},t_{2k+1}])\subseteq q_{v_k}(U_{v_k})$ and $\alpha([t_{2k+1},t_{2k+2}])\subseteq q_{e_k}(U_{e_k})$ for $0\leqslant k \leqslant (m-1)/2$. Since $\widehat{\alpha}(1)=\mathds{1}$, we have $v_m=\mathds{1}$. Since  $q_{v_k}(U_{v_k})$ and $q_{e_k}(U_{e_k})$ are open in $\mathds{H}$, we may define an open subset $W$ of $\Omega(\mathds{H},{\bf 0})$ by \[W=\bigcap_{k=0}^{(m-1)/2} S([t_{2k},t_{2k+1}],q_{v_k}(U_{v_k}))\cap S([t_{2k+1},t_{2k+2}], q_{e_k}(U_{e_k})),\]
where $S(A,B)=\{f\in \Omega(\mathds{H},{\bf 0})\mid f(A)\subseteq B\}$. Then every $\beta\in W$ can be lifted to $\widehat{\beta}:([0,1],0)\rightarrow (\widehat{\mathds{H}},\mathds{1})$ with $q\circ\widehat{\beta}=\beta$ on the same subdivision intervals and through the same sequence of homeomorphisms as $\alpha$, so that $\widehat{\beta}(1)=q_{\mathds{1}}^{-1}\circ \beta(1)=q_{\mathds{1}}^{-1}({\bf 0})=\mathds{1}$, i.e., $h(\beta)=[\beta]\in K$. Hence $\alpha\in W\subseteq h^{-1}(K)$.
\end{proof}

Given a subgroup $H$ of a group $G$, recall that the largest normal subgroup $N$ of $G$ contained in $H$ is given by $N=\bigcap_{g\in G} gHg^{-1}$ and is called the {\em core} of $H$ in $G$. If $N=\{1\}$, we call $H$ a {\em core-free} subgroup of $G$. We now show that $K$ is a core-free subgroup of $\pi_1(\mathds{H},{\bf 0})$.

\begin{proposition}\label{normal}
$K$ does not contain any nontrivial normal subgroup of $\pi_1(\mathds{H},{\bf 0})$.
\end{proposition}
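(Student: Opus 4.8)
The plan is to show that the core $N=\bigcap_{[\gamma]\in\pi_1(\mathds{H},{\bf 0})}[\gamma]K[\gamma]^{-1}$ is trivial by taking an arbitrary nontrivial $[\alpha]\in K$ and producing a conjugate that escapes $K$; equivalently, I will exhibit, for each such $[\alpha]$, a loop whose lift starting at $\mathds{1}$ does \emph{not} return to $\mathds{1}$. The guiding idea, already flagged in the introduction, is that every essential loop in $\mathds{H}$ should have a lift in $\widehat{\mathds{H}}$ that fails to be a loop; the subgraphs $\widehat{Y}_j$ are designed precisely to record one such non-closed lift for each essential finite edge-loop $w_j$.

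First I would reduce to the case of a finite edge-loop. Given $1\neq[\alpha]\in K$, represent $[\alpha]$ by a reduced (possibly infinite) word in the letters $l_i$; since $[\alpha]\neq 1$, some finite truncation is already essential, so I can find an essential \emph{finite} edge-loop $\ell$ in $\mathds{H}$ (a concatenation of finitely many $l_i^{\pm1}$) together with a path $\beta$ in $\mathds{H}$ such that the conjugate $[\beta\cdot\ell\cdot\beta^-]$ lies in the normal closure structure I want to exploit. More cleanly: it suffices to show that for \emph{every} essential finite edge-loop $\ell$ in $\mathds{H}$, there is a lift of $\ell$ (starting at some vertex) which is not a loop, because then the argument in the introduction applies. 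Indeed, if some $[\ell]\neq 1$ with $[\ell]\in N\subseteq K$, then $N$ being normal forces every conjugate $[\beta\cdot\ell\cdot\beta^-]$ into $K$, so \emph{every} lift of $\beta\cdot\ell\cdot\beta^-$ beginning at $\mathds{1}$ is a loop; one then checks this forces every lift of $\ell$ (at every vertex reachable from $\mathds{1}$, i.e.\ every vertex) to be a loop, contradicting the existence of a non-closed lift.

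The heart of the matter is therefore: \emph{each essential finite edge-loop $\ell$ has a non-closed lift}. Here I would write $\ell$ as a finite word $w$ over $\{a_i^{\pm1}\}$ (matching $l_i\leftrightarrow a_i$), noting that $w$ is essential exactly when its reduction $w'\neq\mathds{1}$. By construction $w$ appears in the enumeration as some $w_j$, and Lemma~\ref{islands}(\ref{core}) tells us that $\widehat{Y}_j$ (hence $\Gamma^\ast\subseteq\widehat{\mathds{H}}$) contains the edge-path starting at the vertex $\widehat{w}_j$ and spelling out $w_j$. This edge-path, read in $\widehat{\mathds{H}}$, is exactly a lift of $\ell$ starting at $\widehat{w}_j$; its terminal vertex is the reduced word $(\widehat{w}_jw_j)'$. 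Since $\widehat{w}_j\neq(\widehat{w}_jw_j)'$ precisely when $w_j'\neq\mathds{1}$ (the alternating word $\widehat{w}_j$ uses only $a_1,a_2$ and its cancellation with $w_j$ cannot erase a nontrivial reduced tail, by the length bound built into $|\widehat{w}_j|$), this lift is not a loop. That is the desired non-closed lift.

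The main obstacle I anticipate is the reduction step, not the construction step: translating ``$[\alpha]\neq 1$ with $[\alpha]$ in the core'' into a statement about a concrete essential \emph{finite} edge-loop whose non-closed lift is guaranteed by Lemma~\ref{islands}(\ref{core}). The subtlety is that $\pi_1(\mathds{H})$ contains infinite (non-tame) words, so I must be careful that an element of the core, being normal, still yields a finite essential loop to which the enumeration applies; the right move is to use normality to conjugate by an arbitrary $[\beta]$ and then choose $[\beta]$ so that the relevant lift is forced through a vertex $\widehat{w}_j$ where the finite-word mechanism bites, together with the observation (from Lemma~\ref{islands}(\ref{separate}) and the path-lifting of Proposition~\ref{path}) that a lift failing to close at one basepoint forces failure at $\mathds{1}$ after the conjugating prefix. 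Verifying that the non-closure genuinely propagates to a failure of $\widehat{\alpha}(1)=\mathds{1}$ for a suitable conjugate $\alpha$ of the power/segment of the original loop is where the careful bookkeeping lies.
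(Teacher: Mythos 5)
Your construction step matches the paper's: Lemma~\ref{islands}(\ref{core}) provides, for each essential finite edge-loop $w_j$, a lift starting at $\widehat{w}_j$ ending at the distinct vertex $(\widehat{w}_jw_j)'$ of the tree $\Gamma^\ast$, and your uniqueness argument for transporting non-closure of a conjugate back to the basepoint $\mathds{1}$ is sound. The genuine gap is the reduction that you first sketch, then restate ``more cleanly,'' and finally defer as ``careful bookkeeping.'' Proving that every essential \emph{finite} edge-loop has a non-closed lift does \emph{not} suffice: it only shows that the core $N$ contains no nontrivial class represented by a finite edge-loop. Elements of $\pi_1(\mathds{H},{\bf 0})$ --- and hence potentially all nontrivial elements of $N$ --- may be representable only by loops traversing infinitely many circles, and for those your argument says nothing. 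Nor can such an $[\alpha]$ be replaced by a finite truncation or projection: that is a different group element, not known to lie in $N$, or even in $K$, so non-closure of its lifts gives no contradiction.

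The paper closes exactly this gap with two ingredients absent from your proposal. First, by \cite[Theorem~4.1]{MM}, for $1\neq[\alpha]$ there is an $n$ with $1\neq[f_n\circ\alpha]$, where $f_n$ collapses all circles $C_i$ with $i>n$; one then chooses a representative of $[\alpha]$ \emph{itself} which alternates between subpaths lying in $\bigcup_{i>n}C_i$ and full traversals of circles $C_s$, $s\leqslant n$, the latter spelling a word $w=w_j\in \mathcal{W}_n$ with $w'\neq\mathds{1}$. Second --- and this is the key mechanism you never invoke --- Lemma~\ref{islands}(\ref{small}) guarantees that every vertex $v$ of $\widehat{Y}_j$ has $E_v\subseteq\{1,\dots,n\}$, so all circles $C_i$ with $i>n$ lift to \emph{loops} at such vertices. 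Consequently, after conjugating by the path $\beta$ whose lift runs from $\mathds{1}$ to $\widehat{w}_j$, the lift of the original (possibly infinite-word) loop $\alpha$ absorbs all its small-circle portions into loops and is forced to visit precisely the vertices of the $w_j$-edge-path of Lemma~\ref{islands}(\ref{core}); non-closure then follows from $w_j'\neq\mathds{1}$ and the tree structure, as in your finite case. You cite only parts (\ref{separate}) and (\ref{core}) of the lemma; without part (\ref{small}), the ``propagation'' described in your last paragraph cannot be carried out, and that step is the crux of the proof rather than bookkeeping.
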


\begin{proof}
Let $1\neq[\alpha]\in K$. Consider the maps $f_n:\mathds{H}\rightarrow \bigcup_{i=1}^n C_i$ defined by $f_n(x)=x$ if $x\in C_i$ with $1\leqslant i \leqslant n$ and $f_n(x)={\bf 0}$ otherwise. Put $\alpha_n=f_n\circ \alpha$. By \cite[Theorem~4.1]{MM}, there is an $n\in \mathbb{N}$ such that $1\neq [\alpha_n]\in \pi_1(\bigcup_{i=1}^n C_i, {\bf 0})$.  Choosing a different representative for $[\alpha]$, if necessary, we may assume that there is a partition $0=t_0<t_1<t_2<\cdots <t_m=1$ of $[0,1]$ and a word $w=a_{s_0}^{\epsilon_0} a_{s_1}^{\epsilon_1} \cdots a_{s_{(m-3)/2}}^{\epsilon_{(m-3)/2}}\in {\mathcal W}_n$ with $\epsilon_{k}\in \{+1,-1\}$ and $w'\neq \mathds{1}$, such that $\alpha([t_{2k},t_{2k+1}])\subseteq \bigcup_{i=n+1}^\infty C_i$, and $\alpha(t)=l_{s_k}((t-t_{2k+1})/(t_{2k+2}-t_{2k+1}))$ for all $t\in [t_{2k+1},t_{2k+2}]$ if $\epsilon_{k}=+1$ and $\alpha(t)=l_{s_k}((t-t_{2k+2})/(t_{2k+1}-t_{2k+2}))$ for all $t\in [t_{2k+1},t_{2k+2}]$ if $\epsilon_{k}=-1$. Since $\{w_1, w_2, w_3,\cdots\}$ is a complete list of all non-empty words in $\mathcal W$, there is a $j\in \mathbb{N}$ such that $w=w_j$.
Let $\beta:([0,1],\{0,1\})\rightarrow (\mathds{H},{\bf 0})$ be a path which alternates between $l_1$ and $l_2$ according to the finite word $\widehat{w}_j$, and let $\beta^-(t)=\beta(1-t)$. Consider the lift $\widehat{\gamma}$ of $\gamma=\beta\cdot \alpha \cdot \beta^-$. Then $\widehat{\gamma}(0)=\mathds{1}$ and $\widehat{\gamma}(1/3)=\widehat{w}_j\in \widehat{Y}_j$. By Lemma~\ref{islands}(3), every subpath of $\widehat{\gamma}|_{[1/3,2/3]}$ which lifts one of the $\alpha|_{[t_{2k},t_{2k+1}]}$ is a loop. Hence $\widehat{\gamma}|_{[1/3,2/3]}$ visits the same vertices as the edge-path described in Lemma~\ref{islands}(5). Since $\Gamma^\ast$ is a tree and since $w_j'\neq \mathds{1}$, we have $\widehat{\gamma}(1/3)\neq  \widehat{\gamma}(2/3)$. Consequently, $\beta\cdot \alpha \cdot \beta^-$ does not lift to a loop at $\mathds{1}$. Hence,  $[\beta][\alpha][\beta]^{-1}\not\in K$.
\end{proof}

\begin{remark}
Below  we will see (in Corollary~\ref{nocov}) that there is no covering projection $p:(\widetilde{X},\widetilde{x})\rightarrow (\mathds{H},{\bf 0})$ such that  $p_{\#}(\pi_1(\widetilde{X},\widetilde{x}))=K$.
\end{remark}

\section{The classical (semi)covering construction: \\ open versus open normal subgroups}

  Given a connected and locally path-connected space $X$ and a subgroup $H$ of $\pi_1(X,x)$, we recall the set-up from the proof of
  \cite[Theorem 2.5.13]{Spanier}. On the set of continuous paths $\alpha:([0,1],0)\rightarrow (X,x)$, consider the equivalence relation $\alpha\sim \beta$ iff $\alpha(1)=\beta(1)$ and $[\alpha \cdot\beta^-]\in H$, where $\beta^-(t)=\beta(1-t)$. Denote the equivalence class of $\alpha$ by $\left<\alpha\right>$ and denote the set of all equivalence classes by $\widetilde{X}$. A basis for the topology of $\widetilde{X}$ is given by all elements of the form \[\left<\alpha,U\right>=\{\left<\alpha\cdot\gamma\right>\mid \gamma:([0,1],0)\rightarrow (U,\alpha(1))\},\] where $U$ is an open subset of  $X$ and $\left<\alpha\right>\in \widetilde{X}$ with $\alpha(1)\in U$. (Note that $\left<\beta\right>\in \left<\alpha,U\right>$ implies $\left<\beta,U\right>=\left<\alpha,U\right>$ and that $V\subseteq U$ implies $\left<\alpha,V\right>\subseteq \left<\alpha,U\right>$.)

The space $\widetilde{X}$ is connected and locally path-connected and the map $p:\widetilde{X}\rightarrow X$, given by $p(\left<\alpha\right>)=\alpha(1)$, is a continuous open surjection.
Here are the two issues:\vspace{5pt}

\noindent   {\bf A. Evenly covered neighborhoods.} Any two basis elements of the form $\left<\alpha,U\right>$ and $\left<\beta,U\right>$ are either disjoint or identical. Moreover, if $U$ is a path-connected open neighborhood of some $u\in X$, then \[p^{-1}(U)=\bigcup_{\left<\alpha\right>\in p^{-1}(u)} \left<\alpha,U\right>.\]

\begin{issue}
When are the maps $p|_{\left<\alpha,U\right>}:\left<\alpha,U\right>\rightarrow U$ homeomorphisms?
\end{issue}

\noindent {\bf B. Standard lifts.} Suppose $Y$ is connected and locally path-connected, $f:Y\rightarrow X$ a continuous map, $y\in Y$ and $\left<\alpha\right>\in \widetilde{X}$ with $p(\left<\alpha\right>)=f(y)$. Then there is a continuous lift $\widetilde{f}:(Y,y)\rightarrow (\widetilde{X},\left<\alpha\right>)$ such that $p\circ \widetilde{f}=f$, provided $f_\#(\pi_1(Y,y))\subseteq [\alpha^{-}]H[\alpha]$.
For example, we may define $\widetilde{f}(z)=\left<\alpha\cdot(f\circ\tau)\right>$, where $\tau:[0,1]\rightarrow Y$ is any continuous path from $\tau(0)=y$ to $\tau(1)=z$.
 Note that $[\alpha^{-}]H[\alpha]\subseteq p_\#(\pi_1(\widetilde{X},\left<\alpha\right>))$. Moreover, if $p:\widetilde{X}\rightarrow X$ has unique path lifting, then $p_\#:\pi_1(\widetilde{X},\left<\alpha\right>)\rightarrow \pi_1(X,f(y))$ is a monomorphism onto $[\alpha^-]H[\alpha]$. (See, for example, \cite[Proposition~6.9]{FischerZastrow}.)

  \begin{issue} When are the lifts $\widetilde{f}$ unique?
\end{issue}

 The lifts $\widetilde{f}$ will be unique if $p:\widetilde{X}\rightarrow X$ has {\em unique path lifting} (UPL), which makes it a Serre fibration. Note that for $p:\widetilde{X}\rightarrow X$ to have UPL, it need not have evenly covered neighborhoods or be a local homeomorphism---it might even have some non-discrete fibers. Indeed, as was shown in \cite[Theorem~6.10]{FischerZastrow}, $p:\widetilde{X}\rightarrow X$ has UPL if $H$ is the kernel of the natural homomorphism $\pi(X,x)\rightarrow \check{\pi}_1(X,x)$ to the first \v{C}ech homotopy group. For example, when
$X=\mathds{H}$, this kernel equals $H=\{1\}$. The resulting map  $p:\widetilde{\mathds{H}}\rightarrow \mathds{H}$ has one exceptional (non-discrete) fiber \cite[Example~4.15]{FischerZastrow}. Moreover, for $p:\widetilde{\mathds{H}}\rightarrow \mathds{H}$, the (unique) lifts of paths and their homotopies do not vary continuously in the compact-open topology. (Indeed, similar to the proof of Lemma~\ref{same} below, one can readily construct a sequence $\tau_n:([0,1],0)\rightarrow (\mathds{H},{\bf 0})$ of reparametrizations of the loops $l_1\cdot l_n \cdot l_1$, which converge to $\tau=l_1\cdot l_1$ in the compact-open topology, but whose lifts $\widetilde{\tau}_n:([0,1],0)\rightarrow (\widetilde{\mathds{H}},\ast)$ do not even converge pointwise to the lift $\widetilde{\tau}:([0,1],0)\rightarrow (\widetilde{\mathds{H}},\ast)$ of $\tau$.)

In contrast, a local homeomorphism has discrete fibers. If a local homeomorphism  $p:\widehat{X}\rightarrow X$ has unique lifts of paths and their homotopies, then classical arguments show that it also has the above (unique) standard lifts subject to the standard criterion: $f_\#(\pi_1(Y,y))\subseteq p_\#(\pi_1(\widehat{X},\widehat{x}))$, where $p(\widehat{x})=f(y)$.  Moreover, $p_\#(\pi_1(\widehat{X},\widehat{x}))$ is open in $\pi_1(X,x)$ for every $\widehat{x}\in p^{-1}(x)$. The proof of the latter fact is a slight modification of the proof of Proposition~\ref{open}. (The only adjustment one needs to make is to include sets of the form $S(\{t_i\},p(U_i\cap U_{i+1}))$ into the intersection defining $W$, in case $p(U_i\cap U_{i+1})\neq p(U_i)\cap p(U_{i+1})$.) A straightforward variation of this proof also shows that all lifts of paths and their homotopies vary continuously in the compact-open topology.

\begin{remark}\label{Hausdorff} It might be worth noting that a local homeomorphism with Hausdorff domain is a semicovering if and only if all lifts of paths and their homotopies {\em exist}. This follows from the previous paragraph and (the natural modification of) the proof of Lemma~\ref{UPC} below.
\end{remark}

We call $X$ {\em homotopically Hausdorff relative to $H$} if every fiber of $p:\widetilde{X}\rightarrow X$ is T$_1$. It is shown in \cite[Proposition~6.4]{FischerZastrow} that for $p:\widetilde{X}\rightarrow X$ to have UPL, $X$ must be homotopically Hausdorff relative to $H$. The following lemma has the same proof as \cite[Lemma~2.1]{FischerZastrow}.

\begin{lemma}\label{same}
If $H$ is open in $\pi_1(X,x)$, then all fibers of $p:\widetilde{X}\rightarrow X$ are discrete.
\end{lemma}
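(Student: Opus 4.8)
The plan is to prove that if $H$ is open in $\pi_1(X,x)$, then every fiber $p^{-1}(u)$ of the map $p:\widetilde{X}\rightarrow X$ from the \emph{Spanier} construction is discrete, by producing around each point $\left<\alpha\right>\in p^{-1}(u)$ a basic open neighborhood $\left<\alpha,U\right>$ that meets the fiber $p^{-1}(u)$ only in $\left<\alpha\right>$ itself. Since the basic sets $\left<\alpha,U\right>$ form a basis for the topology of $\widetilde{X}$ and are known (by Issue A) to be either disjoint or identical, isolating $\left<\alpha\right>$ within its fiber amounts to choosing $U$ small enough that no loop based at $u=\alpha(1)$ lying in $U$ can carry us to a different equivalence class over $u$.

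First I would recall how membership in the fiber interacts with the equivalence relation: two classes $\left<\alpha\right>$ and $\left<\alpha\cdot\gamma\right>$ with $\gamma$ a loop in $U$ at $\alpha(1)$ coincide precisely when $[\alpha\cdot\gamma\cdot\alpha^-]\in [\alpha]H[\alpha]^{-1}$, i.e. when $[\gamma]$ lies in the conjugate subgroup $[\alpha^-]H[\alpha]$ of $\pi_1(X,u)$. Thus I want to choose $U$ so that every loop $\gamma$ in $U$ at $u$ has $[\gamma]\in[\alpha^-]H[\alpha]$. The crucial input is that $H$ is \emph{open}: the conjugate $[\alpha^-]H[\alpha]$ is then also open in $\pi_1(X,u)$, because left and right translation in $\pi_1$ are homeomorphisms (the paper cites \cite{Calcut} for exactly this). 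Openness of a subgroup $H'$ of $\pi_1(X,u)$ means that $h^{-1}(H')$ is open in $\Omega(X,u)$, where $h$ is the quotient map; in particular it contains a basic compact-open neighborhood of the constant loop. I would unwind this to say there is an open neighborhood $U$ of $u$ such that every loop $\gamma:([0,1],\{0,1\})\rightarrow(U,u)$ represents a class in $H'=[\alpha^-]H[\alpha]$, using that $X$ is locally path-connected to shrink $U$ to a path-connected neighborhood. This is the same mechanism as in \cite[Lemma~2.1]{FischerZastrow}, which the statement explicitly invokes.

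With such a $U$ in hand, the neighborhood $\left<\alpha,U\right>$ isolates $\left<\alpha\right>$ in its fiber: any other class over $u$ inside $\left<\alpha,U\right>$ has the form $\left<\alpha\cdot\gamma\right>$ with $\gamma$ a loop in $U$ based at $u$, and by construction $[\gamma]\in[\alpha^-]H[\alpha]$, forcing $\left<\alpha\cdot\gamma\right>=\left<\alpha\right>$. Hence $\left<\alpha,U\right>\cap p^{-1}(u)=\{\left<\alpha\right>\}$, so $\left<\alpha\right>$ is isolated in the fiber. Since $\left<\alpha\right>$ was arbitrary, every fiber is discrete.

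The main obstacle, and the only genuinely delicate point, is the passage from ``$H'$ is open in $\pi_1(X,u)$'' to ``there is an open $U\ni u$ all of whose loops represent classes in $H'$.'' This is where the definition of the quotient topology on $\pi_1$ and the structure of the compact-open topology on $\Omega(X,u)$ must be used carefully: one must argue that openness of $h^{-1}(H')$ around the constant loop yields a genuine neighborhood $U$ in $X$, rather than merely a neighborhood in loop space, and here local path-connectedness of $X$ is essential to reparametrize and concatenate arbitrary small loops into the standard form recognized by the compact-open subbasic sets. Everything else is bookkeeping with the Spanier basis and the conjugation-invariance of openness, so I expect the proof to be short once this translation is set up — which is presumably why the authors simply cite the parallel argument in \cite[Lemma~2.1]{FischerZastrow}.
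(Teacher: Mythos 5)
Your argument is correct, but it takes a genuinely different route from the paper's. The paper never leaves the basepoint $x$: it applies openness of $h^{-1}(H)$ in $\Omega(X,x)$ at the loop $\alpha\cdot\alpha^-$ (whose class is $1\in H$), extracts a basic neighborhood $\bigcap_{i=1}^n S(A_i,V_i)\subseteq h^{-1}(H)$, chooses $U\subseteq V_i$ for those $i$ with $1/2\in A_i$, and then---this is the paper's one delicate step---verifies that for every loop $\gamma$ in $U$ at $\alpha(1)$ some \emph{reparametrization} $\tau$ of $\alpha\cdot\gamma\cdot\alpha^-$ lies in $\bigcap_{i=1}^n S(A_i,V_i)$, whence $[\alpha\cdot\gamma\cdot\alpha^-]\in H$. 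You instead transfer the problem to the endpoint $u=\alpha(1)$ and apply openness at the \emph{constant} loop. There the ``genuinely delicate point'' you flag in your last paragraph is actually a non-issue: if $\bigcap_{i=1}^n S(A_i,V_i)$ is a basic neighborhood of the constant loop contained in the preimage of your subgroup, then $U=\bigcap_i V_i$ (taken over the nonempty $A_i$) is an open neighborhood of $u$, and \emph{any} loop with image in $U$ maps every compact $A_i$ into $V_i$---no reparametrization or concatenation normalization is needed, and path-connectedness of $U$ is not needed either, since the basis elements $\left<\alpha,U\right>$ are defined for arbitrary open $U$. So your route trades the paper's reparametrization bookkeeping for a change-of-basepoint argument; yours isolates the point-set content (openness at the identity plus conjugation invariance) more cleanly, while the paper's stays entirely inside $\Omega(X,x)$ and needs no facts about maps between loop spaces at different basepoints.

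Two details need tightening. First, a slip: by the definition of $\sim$, the condition for $\left<\alpha\cdot\gamma\right>=\left<\alpha\right>$ is $[\alpha\cdot\gamma\cdot\alpha^-]\in H$ itself, not membership in $[\alpha]H[\alpha]^{-1}$ (which does not even parse, since $\alpha$ is a path from $x$ to $u$); the equivalent form you actually work with, $[\gamma]\in[\alpha^-]H[\alpha]$, is the correct one. Second, and more substantively, the openness of $[\alpha^-]H[\alpha]$ in $\pi_1(X,u)$ does not follow from the result of \cite{Calcut} you cite: that result says left and right translations are self-homeomorphisms of the fixed group $\pi_1(X,x)$, whereas here you need the change-of-basepoint isomorphism $\pi_1(X,u)\rightarrow\pi_1(X,x)$, $[\gamma]\mapsto[\alpha\cdot\gamma\cdot\alpha^-]$, to be continuous. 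This is true but needs its own (easy) argument: the map $\Omega(X,u)\rightarrow\Omega(X,x)$, $\gamma\mapsto\alpha\cdot\gamma\cdot\alpha^-$, is continuous in the compact-open topologies, hence so is $\gamma\mapsto[\alpha\cdot\gamma\cdot\alpha^-]$ into $\pi_1(X,x)$, and since $\Omega(X,u)\rightarrow\pi_1(X,u)$ is a quotient map, the induced map on $\pi_1(X,u)$ is continuous. In fact you can bypass the topology on $\pi_1(X,u)$ entirely: the set $\{\gamma\in\Omega(X,u)\mid[\alpha\cdot\gamma\cdot\alpha^-]\in H\}$ is open, being the preimage of $H$ under a continuous map, and it contains the constant loop---that is all your argument uses. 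With this patch your proof is complete.
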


\begin{proof}
 Let $h:\Omega(X,x)\rightarrow \pi_1(X,x)$ denote the quotient map. Let $\left<\alpha\right>\in \widetilde{X}$.
Since $1=[\alpha\cdot \alpha^-]\in H$ and $H$ is open in $\pi_1(X,x)$, there are compact subsets $A_i$ of $[0,1]$ and open subsets $V_i$ of $X$ such that $\alpha\cdot \alpha^-\in \bigcap_{i=1}^n S(A_i, V_i)\subseteq h^{-1}(H)$. Choose a path-connected open neighborhood $U$ of $\alpha(1)$ in $X$ such that $U\subseteq V_i$ whenever $1/2\in A_i$. Now, let $\left<\beta\right>\in \left<\alpha,U\right>$ with $p(\left<\beta\right>)=p(\left<\alpha\right>)$. Then $\left<\beta\right>=\left<\alpha\cdot\gamma\right>$ for some loop $\gamma$ in $U$. By choice of $U$, there is a reparametrization $\tau$ of $\alpha\cdot\gamma\cdot\alpha^-$ such that $\tau\in \bigcap_{i=1}^n S(A_i, V_i)\subseteq h^{-1}(H)$. Hence $[\alpha\cdot\gamma\cdot\alpha^-]\in H$, so that $\left<\beta\right>=\left<\alpha\right>$.
\end{proof}

Put $\pi(\alpha, U)=[\alpha]i_\#(\pi_1(U,\alpha(1)))[\alpha^-]\leqslant \pi_1(X,x)$, where $i:U\hookrightarrow X$ is inclusion.

\vspace{5pt}

Comparing the definitions of $\left<\alpha,U\right>$ and $\pi(\alpha,U)$, we observe:

\begin{lemma}\label{fibers}
  Let  $U$ be an open neighborhood of some  $u\in X$ and $\left<\alpha\right>\in p^{-1}(u)$. Then $\left<\alpha,U\right>\cap p^{-1}(u)=\{\left<\alpha\right>\}$ if and only if  $\pi(\alpha,U)\subseteq H$.
\end{lemma}

In particular, if $\left<\alpha\right>$ is an isolated point of a fiber $p^{-1}(u)$ of $p:\widetilde{X}\rightarrow X$, then there is a path-connected open neighborhood $U$ of $u=\alpha(1)$ in $X$ such that $\pi(\alpha,U)\subseteq H$.

\begin{lemma}\label{inH} Let $\left<\alpha\right>\in \widetilde{X}$ and let $U$ be a path-connected open neighborhood of $\alpha(1)$ in $X$. Then $\pi(\alpha,U)\subseteq H$ if and only if $p|_{\left<\alpha,U\right>}:\left<\alpha,U\right>\rightarrow U$ is a homeomorphism.
\end{lemma}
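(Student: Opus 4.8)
The plan is to reduce the homeomorphism question to a question of injectivity, and then to read off injectivity from the fibre condition already governed by Lemma~\ref{fibers}. First I would record what comes for free from the set-up. Since the sets $\left<\beta,V\right>$ form a basis for the topology of $\widetilde{X}$, the set $\left<\alpha,U\right>$ is open, and since $p:\widetilde{X}\rightarrow X$ is a continuous open surjection, the restriction $p|_{\left<\alpha,U\right>}$ is continuous and open as a map into $X$, with image contained in $U$. Because $U$ is path-connected, every $u'\in U$ is joined to $\alpha(1)$ by a path $\gamma$ in $U$, so that $\left<\alpha\cdot\gamma\right>\in\left<\alpha,U\right>$ maps to $u'$; hence $p$ carries $\left<\alpha,U\right>$ onto $U$. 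Thus $p|_{\left<\alpha,U\right>}:\left<\alpha,U\right>\rightarrow U$ is a continuous open surjection, and any such map is a homeomorphism precisely when it is injective. This single reduction is what turns the statement into a routine one, so I would dispatch it at the outset.

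Next I would settle the easy implication. If $p|_{\left<\alpha,U\right>}$ is a homeomorphism, then it is in particular injective over the point $u=\alpha(1)$, so that $\left<\alpha,U\right>\cap p^{-1}(u)=\{\left<\alpha\right>\}$, and Lemma~\ref{fibers} immediately gives $\pi(\alpha,U)\subseteq H$.

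For the reverse implication I would verify injectivity by hand. Assuming $\pi(\alpha,U)\subseteq H$, I would take two points $\left<\alpha\cdot\gamma_1\right>$ and $\left<\alpha\cdot\gamma_2\right>$ of $\left<\alpha,U\right>$ with the same image $u'$ under $p$, where $\gamma_1,\gamma_2$ are paths in $U$ emanating from $\alpha(1)$ with common terminal point $u'=\gamma_1(1)=\gamma_2(1)$. Then $\gamma_1\cdot\gamma_2^-$ is a loop in $U$ based at $\alpha(1)$, so its class lies in $i_\#(\pi_1(U,\alpha(1)))$, whence $[(\alpha\cdot\gamma_1)\cdot(\alpha\cdot\gamma_2)^-]=[\alpha]\,[\gamma_1\cdot\gamma_2^-]\,[\alpha^-]\in\pi(\alpha,U)\subseteq H$. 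By the definition of the equivalence relation defining $\widetilde{X}$, this forces $\left<\alpha\cdot\gamma_1\right>=\left<\alpha\cdot\gamma_2\right>$, so $p|_{\left<\alpha,U\right>}$ is injective and therefore a homeomorphism.

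I do not expect a serious obstacle here: all the substance sits in the first paragraph, namely recognizing surjectivity from the path-connectedness of $U$ and openness from that of $p$, after which one direction is a one-line appeal to Lemma~\ref{fibers} and the other is the short conjugated-loop computation. The only points demanding a little care are the standard fact that a continuous open bijection is a homeomorphism, and the observation that every element of $\left<\alpha,U\right>$ really does have the normal form $\left<\alpha\cdot\gamma\right>$ with $\gamma$ a path in $U$ starting at $\alpha(1)$, which is exactly how the basic open set is defined.
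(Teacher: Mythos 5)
Your proof is correct and follows essentially the same route as the paper's: both reduce the problem to injectivity via the observation that $p|_{\left<\alpha,U\right>}$ is a continuous open surjection (using path-connectedness of $U$ and openness of $p$), and both establish the nontrivial direction by the same conjugated-loop computation $[\alpha\cdot(\gamma_1\cdot\gamma_2^-)\cdot\alpha^-]\in\pi(\alpha,U)\subseteq H$. The only cosmetic difference is that for the converse the paper simply says the argument is ``similar,'' whereas you delegate it to Lemma~\ref{fibers}, which encapsulates exactly that argument restricted to the fiber over $\alpha(1)$; both are fine.
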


\begin{proof}
Since $U$ is path-connected, $p|_{\left<\alpha,U\right>}:\left<\alpha,U\right>\rightarrow U$ is a continuous open surjection. Hence, it suffices to show that $\pi(\alpha,U)\subseteq H$ if and only if $p|_{\left<\alpha,U\right>}:\left<\alpha,U\right>\rightarrow U$ is injective.
To this end, suppose $\pi(\alpha,U)\subseteq H$. Let $\left<\alpha\cdot \gamma\right>, \left<\alpha\cdot \delta\right>\in  \left<\alpha,U\right>$ with $\gamma, \delta:([0,1],0)\rightarrow (U, \alpha(1))$ and $p(\left<\alpha\cdot \gamma\right>)=p(\left<\alpha\cdot \delta\right>)$. Then $[\alpha\cdot( \gamma\cdot\delta^-)\cdot\alpha^-]\in \pi(\alpha,U)\subseteq H$. Hence $\left<\alpha\cdot\gamma\right>=\left<\alpha\cdot\delta\right>$. The converse is similar.
\end{proof}

\begin{lemma}\label{UPC}
If $p:\widetilde{X}\rightarrow X$ is a local homeomorphism, then it has UPL.
\end{lemma}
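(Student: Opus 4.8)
The plan is to prove unique path lifting by the standard connectedness argument. Suppose $g,h:[0,1]\to\widetilde{X}$ are two paths with $p\circ g=p\circ h=:f$ and $g(0)=h(0)$, and set $A=\{t\in[0,1]\mid g(t)=h(t)\}$. Since $0\in A$ and $[0,1]$ is connected, it suffices to show that $A$ is both open and closed; then $A=[0,1]$ and $g=h$, which is UPL.

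The one fact I would extract from the hypothesis that $p$ is a local homeomorphism is this: for every $\left<\alpha\right>\in\widetilde{X}$ there is a path-connected open neighborhood $U$ of $\alpha(1)$ such that $p|_{\left<\alpha,U\right>}:\left<\alpha,U\right>\to U$ is a homeomorphism. Indeed, a local homeomorphism restricts to a homeomorphism on some open neighborhood $N$ of $\left<\alpha\right>$; since the sets $\left<\alpha,U\right>$ form a neighborhood basis at $\left<\alpha\right>$ and $X$ is locally path-connected, one may choose $U$ path-connected with $\left<\alpha,U\right>\subseteq N$, and then $p|_{\left<\alpha,U\right>}$ is an injective continuous open surjection onto $U$, hence a homeomorphism. (Equivalently, by Lemma~\ref{inH}, such a $U$ is exactly one with $\pi(\alpha,U)\subseteq H$.) I would also record that this property passes to any smaller path-connected open neighborhood $V\subseteq U$, since $\left<\alpha,V\right>\subseteq\left<\alpha,U\right>$. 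Openness of $A$ would then be immediate: if $g(t_0)=h(t_0)=\left<\alpha\right>$, pick such a $U$; by continuity $g$ and $h$ map a neighborhood $I$ of $t_0$ into $\left<\alpha,U\right>$, and since $p$ is injective there while $p\circ g=p\circ h$, we get $g=h$ on $I$, i.e. $I\subseteq A$.

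The main obstacle is the closedness of $A$, equivalently the openness of its complement; this is where the real content lies. The care needed is that $X$ is not assumed Hausdorff, so I cannot simply invoke Hausdorffness of $\widetilde{X}$ to separate $g(t_0)$ from $h(t_0)$. The saving observation is that I only ever need to separate two points lying in the \emph{same} fiber. Given $t_0\notin A$, write $g(t_0)=\left<\alpha\right>$ and $h(t_0)=\left<\beta\right>$ with $\alpha(1)=\beta(1)=f(t_0)$, and choose a common path-connected open $U$ contained in the intersection of the two neighborhoods furnished above, so that both $p|_{\left<\alpha,U\right>}$ and $p|_{\left<\beta,U\right>}$ are homeomorphisms onto $U$. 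By the dichotomy recorded under A (Evenly covered neighborhoods)---any two basis sets $\left<\alpha,U\right>$ and $\left<\beta,U\right>$ are disjoint or identical---either they are identical, whence injectivity of $p$ on that set forces $\left<\alpha\right>=\left<\beta\right>$, contradicting $t_0\notin A$; or they are disjoint.

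In the disjoint case, continuity of $g$ and $h$ yields a neighborhood $I$ of $t_0$ with $g(I)\subseteq\left<\alpha,U\right>$ and $h(I)\subseteq\left<\beta,U\right>$; disjointness then gives $g(t)\neq h(t)$ for all $t\in I$, so $I$ misses $A$. Hence the complement of $A$ is open, $A$ is closed, and the connectedness argument completes the proof. I expect the only delicate point to be the common-$U$ reduction together with the disjoint-or-identical dichotomy in the closedness step; everything else is routine, and the argument never needs separation in $X$ itself, only the fiberwise separation supplied by that dichotomy.
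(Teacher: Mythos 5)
Your proof is correct and follows the paper's argument in all essentials: the same clopen/connectedness decomposition of the agreement set, with openness obtained from injectivity of $p$ near a point of agreement, and closedness from the disjoint-or-identical dichotomy for the basis sets $\left<\alpha,U\right>$ and $\left<\beta,U\right>$. The only (immaterial) difference is how the ``identical'' case is ruled out in the closedness step: the paper first observes that the fibers are discrete, hence T$_1$, and chooses $U$ with $h(t_0)\not\in\left<\alpha,U\right>$, whereas you choose a common path-connected $U$ on which both restrictions of $p$ are homeomorphisms and let injectivity force the contradiction.
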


\begin{proof}
 Let $g,h:[0,1]\rightarrow \widetilde{X}$ be two continuous paths with $p\circ g=p\circ h$. We show that $E=\{t\in [0,1]\mid g(t)=h(t)\}$ is both closed and open in $[0,1]$. (i) Let $t\in [0,1]\setminus E$. Say, $g(t)=\left<\alpha\right>$ and $h(t)=\left<\beta\right>$. Since the fibers of $p:\widetilde{X}\rightarrow X$ are discrete, they are T$_1$. So, we may choose an open subset $U$ of $X$ with $\alpha(1)\in U$ such that $h(t)\not\in \left<\alpha,U\right>$. Then
$\left<\alpha,U\right>\cap \left<\beta,U\right>= \emptyset$. Choose an open subset $V$ of $[0,1]$ with $t\in V$ such that $g(V)\subseteq \left<\alpha,U\right>$ and $h(V)\subseteq \left<\beta,U\right>$. Then $t\in V\subseteq [0,1]\setminus E$.
(ii) Let $t\in E$. Choose an open neighborhood $\widetilde{U}$ of $g(t)=h(t)$ such that $U=p(\widetilde{U})$ is open in $X$ and $p|_{\widetilde{U}}:\widetilde{U}\rightarrow U$ is a homeomorphism. Then $t\in g^{-1}(\widetilde{U})\cap h^{-1}(\widetilde{U})\subseteq E$.
\end{proof}

Applying the usual lifting classification \cite[2.5.2]{Spanier} to the above, we obtain:

\begin{corollary} \cite{Brazas2012}\label{B}
The connected semicovering spaces of a connected and locally path-connected topological space $X$ are classified by the conjugacy classes of the open subgroups of $\pi_1(X,x)$.
\end{corollary}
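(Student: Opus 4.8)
The plan is to set up a bijection between based connected semicoverings of $X$ and open subgroups of $\pi_1(X,x)$, and then to pass to conjugacy classes by forgetting the base point, invoking the standard lifting classification \cite[2.5.2]{Spanier} at the very end. All the substantive work has been packaged into the lemmas of this section, so the proof is mainly a matter of chaining them correctly.

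First I would show that each open subgroup produces a semicovering. Given an open $H\leqslant\pi_1(X,x)$, form $p:\widetilde{X}\to X$ as in the set-up recalled at the start of the section. The crucial point is that $p$ is a local homeomorphism, and this is precisely where openness of $H$ is used: by Lemma~\ref{same} all fibers of $p$ are discrete, so every $\left<\alpha\right>$ is an isolated point of its fiber; hence by the consequence of Lemma~\ref{fibers} there is a path-connected open neighborhood $U$ of $\alpha(1)$ with $\pi(\alpha,U)\subseteq H$, and Lemma~\ref{inH} then makes $p|_{\left<\alpha,U\right>}:\left<\alpha,U\right>\to U$ a homeomorphism. Since these sets $\left<\alpha,U\right>$ are open and cover $\widetilde{X}$, the map $p$ is a local homeomorphism, and Lemma~\ref{UPC} supplies unique path lifting. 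To obtain existence of lifts I would feed the simply connected domains $Y=[0,1]$ and $Y=[0,1]^2$ into the standard-lift construction $\widetilde{f}(z)=\left<\alpha\cdot(f\circ\tau)\right>$ recalled above: the criterion $f_\#(\pi_1(Y,y))\subseteq[\alpha^-]H[\alpha]$ is vacuous for simply connected $Y$, so all paths and homotopies lift, and their compact-open continuity is the ``straightforward variation'' noted in the text. Thus $p$ is a semicovering, and taking $\alpha$ to be the constant path at $x$ in the monomorphism statement recalled above yields $p_\#(\pi_1(\widetilde{X},\left<\alpha\right>))=[\alpha^-]H[\alpha]=H$.

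Conversely, a connected semicovering $p:\widehat{X}\to X$ is by definition a local homeomorphism with unique lifting of paths and homotopies, so the discussion preceding Lemma~\ref{same} gives that $H:=p_\#(\pi_1(\widehat{X},\widehat{x}))$ is open in $\pi_1(X,x)$, and moving $\widehat{x}$ within $p^{-1}(x)$ replaces $H$ by a conjugate. To close the correspondence I would apply the lifting classification: since semicoverings satisfy the standard lifting criterion $f_\#(\pi_1(Y,y))\subseteq p_\#(\pi_1(\widehat{X},\widehat{x}))$, two based connected semicoverings with equal associated subgroups admit mutual lifts which, by uniqueness of lifts, are inverse homeomorphisms, hence based-equivalent. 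Combined with the first step, this gives a bijection between based connected semicoverings and open subgroups; dropping the base point turns equality of subgroups into conjugacy and delivers the stated classification.

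The hard part is the single implication ``$H$ open $\Rightarrow$ $p$ is a local homeomorphism,'' which is exactly the content of the chain Lemma~\ref{same}$\to$Lemma~\ref{fibers}$\to$Lemma~\ref{inH}: openness of $H$ forces the fibers to be discrete, which produces the evenly covered neighborhoods $\left<\alpha,U\right>$ on which $p$ restricts to a homeomorphism. Everything afterward---unique path lifting, existence and compact-open continuity of lifts, and the final lifting classification---is formal once those neighborhoods are in hand.
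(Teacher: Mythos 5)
Your proposal is correct and follows essentially the same route as the paper: the paper's proof is precisely "apply the lifting classification of Spanier 2.5.2 to the above," where "the above" is exactly the chain you assemble — Lemma~\ref{same} $\to$ Lemma~\ref{fibers} $\to$ Lemma~\ref{inH} to get a local homeomorphism from an open subgroup, Lemma~\ref{UPC} for unique path lifting, the standard lifts for existence and their compact-open continuity, and the openness of $p_\#(\pi_1(\widehat{X},\widehat{x}))$ (via the modification of Proposition~\ref{open}) for the converse. Your write-up just makes explicit the steps the paper leaves implicit.
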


\begin{remark}
The classification of semicoverings given in \cite[Corollary 7.20]{Brazas2012} holds for more general spaces, namely for so-called locally wep-connected spaces. Also note that if  $H_1\subseteq H_2$   are two subgroups of $\pi_1(X,x)$ such that $H_1$ is open in  $\pi_1(X,x)$, then $H_2$ is open in $\pi_1(X,x)$, because it equals a union of cosets of $H_1$.
\end{remark}

\begin{lemma}\label{wind}
 Let $\left<\alpha\right>,\left<\beta\right>\in \widetilde{X}$ with $p(\left<\alpha\right>)=p(\left<\beta\right>)\in U$ for some path-connected open subset $U$ of $X$ for which $p|_{\left<\alpha,U\right>}:\left<\alpha,U\right>\rightarrow U$ is a homeomorphism.

 Then $\pi(\alpha,U)\subseteq H$. Moreover, if $g \pi(\alpha,U)g^{-1}\subseteq H$ with $g=[\beta\cdot \alpha^-]\in \pi_1(X,x)$, then $p|_{\left<\beta,U\right>}:\left<\beta,U\right>\rightarrow U$ is also a homeomorphism.
\end{lemma}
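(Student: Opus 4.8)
The plan is to reduce both assertions to Lemma~\ref{inH}, which characterizes the statement that $p|_{\left<\alpha,U\right>}:\left<\alpha,U\right>\rightarrow U$ is a homeomorphism as being equivalent to the inclusion $\pi(\alpha,U)\subseteq H$. Since we are handed the hypothesis that $p|_{\left<\alpha,U\right>}:\left<\alpha,U\right>\rightarrow U$ is a homeomorphism, Lemma~\ref{inH} applied to $\left<\alpha\right>$ delivers the first assertion $\pi(\alpha,U)\subseteq H$ with no further work. Thus all of the genuine content sits in the second assertion, and for it Lemma~\ref{inH}—applied now to $\left<\beta\right>$—tells us that it suffices to establish $\pi(\beta,U)\subseteq H$.

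First I would record the elementary observation that $\alpha(1)=p(\left<\alpha\right>)=p(\left<\beta\right>)=\beta(1)$; call this common endpoint $u$. Consequently $\pi(\alpha,U)=[\alpha]\,i_\#(\pi_1(U,u))\,[\alpha^-]$ and $\pi(\beta,U)=[\beta]\,i_\#(\pi_1(U,u))\,[\beta^-]$ are conjugates of the very same subgroup $i_\#(\pi_1(U,u))$ of $\pi_1(X,x)$. The crux of the argument is then the single group-theoretic identity $g\,\pi(\alpha,U)\,g^{-1}=\pi(\beta,U)$, where $g=[\beta\cdot\alpha^-]$. To verify it I would take a generic generator $[\alpha\cdot\ell\cdot\alpha^-]$ of $\pi(\alpha,U)$, with $\ell$ a loop at $u$ in $U$, and compute $g\,[\alpha\cdot\ell\cdot\alpha^-]\,g^{-1}=[\beta\cdot\alpha^-]\,[\alpha\cdot\ell\cdot\alpha^-]\,[\alpha\cdot\beta^-]=[\beta\cdot\ell\cdot\beta^-]$, using that the interior factors $\alpha^-\cdot\alpha$ cancel up to homotopy. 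As $\ell$ ranges over precisely the loops at $u$ in $U$ that generate $\pi(\beta,U)$ as well, this yields the claimed equality of subgroups.

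With this identity in place the second assertion is immediate: the hypothesis $g\,\pi(\alpha,U)\,g^{-1}\subseteq H$ becomes exactly $\pi(\beta,U)\subseteq H$, whence Lemma~\ref{inH} (for $\left<\beta\right>$) concludes that $p|_{\left<\beta,U\right>}:\left<\beta,U\right>\rightarrow U$ is a homeomorphism. I do not expect a real obstacle in this lemma; the only points demanding care are the bookkeeping of composition order in the conjugation computation and the preliminary check that $\beta\cdot\alpha^-$ is in fact a loop based at $x$—which holds precisely because $\alpha$ and $\beta$ share the terminal point $u$—so that $g=[\beta\cdot\alpha^-]$ is a well-defined element of $\pi_1(X,x)$.
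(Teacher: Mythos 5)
Your proof is correct and follows the same route as the paper: both reduce the statement to Lemma~\ref{inH} via the conjugation identity $\pi(\beta,U)=g\,\pi(\alpha,U)\,g^{-1}$, which the paper invokes without proof and you verify explicitly. No gaps; your generator computation and the check that $g=[\beta\cdot\alpha^-]$ is well defined are exactly the details the paper leaves to the reader.
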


\begin{proof}
This follows from Lemma~\ref{inH}, since $\pi(\beta,U)=g\pi(\alpha,U) g^{-1}$.
\end{proof}

 It follows that, if $H$ is open and normal in $\pi_1(X,x)$, then $p:\widetilde{X}\rightarrow X$ is a covering projection.
  Conversely, suppose $p:(\widetilde{X},\widetilde{x})\rightarrow (X,x)$ is a covering projection with $p_\#(\pi_1(\widetilde{X},\widetilde{x}))=H$. We then return to the line of argument used in \cite[2.5.11 and 2.5.13]{Spanier}. If $U$ is a path-connected open subset of~$X$ that is evenly covered by $p:\widetilde{X}\rightarrow X$, then $\pi(\alpha,U)\subseteq p_\#(\pi_1(\widetilde{X},\widetilde{x}))=H$ for all $\alpha$.
  The subgroup $N$ of $\pi_1(X,x)$ generated by all such $\pi(\alpha,U)$ is a normal subgroup of $\pi_1(X,x)$ which is contained in $H$. If we apply the above construction with $H$ replaced by any subgroup of $\pi_1(X,x)$ containing $N$, we obtain a covering projection by Lemma~\ref{inH}. So, we also  obtain:

\begin{corollary}\cite{Torabi}\label{T}
The connected covering spaces of a connected and locally path-connected topological space $X$ are classified by the conjugacy classes of those (open) subgroups of $\pi_1(X,x)$ which contain an open normal subgroup of $\pi_1(X,x)$.
\end{corollary}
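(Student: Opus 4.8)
The plan is to read this as a refinement of Corollary~\ref{B} rather than a fresh construction. Every covering projection is a semicovering, so the connected coverings sit inside the connected semicoverings, which Corollary~\ref{B} already matches with conjugacy classes of open subgroups of $\pi_1(X,x)$ through the construction $p:\widetilde{X}\rightarrow X$. It therefore suffices to decide, among the open subgroups $H$, for which ones this $p$ is an honest covering, and to show that these are exactly the $H$ containing an open normal subgroup. The entire question is whether the basis sets $\left<\alpha,U\right>$ can be assembled into evenly covered neighborhoods, and the governing mechanism is Lemma~\ref{inH} together with the conjugation identity $\pi(\beta,U)=g\pi(\alpha,U)g^{-1}$ (with $g=[\beta\cdot\alpha^-]$) recorded in Lemma~\ref{wind}.

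For the implication ``contains an open normal subgroup $\Rightarrow$ covering,'' I would begin with an open $H$ and an open normal subgroup $N\trianglelefteq\pi_1(X,x)$ with $N\subseteq H$. Since $N$ is open, Lemma~\ref{same} makes every fiber of the $N$-construction discrete, so by the remark following Lemma~\ref{fibers} each point admits a path-connected $U$ with $\pi(\alpha,U)\subseteq N$. Running the construction for $H$ instead, I would verify that the same $U$ is evenly covered: for every $\left<\beta\right>\in p^{-1}(\alpha(1))$, Lemma~\ref{wind} gives $\pi(\beta,U)=g\pi(\alpha,U)g^{-1}\subseteq gNg^{-1}=N\subseteq H$ by normality of $N$, so each $p|_{\left<\beta,U\right>}$ is a homeomorphism by Lemma~\ref{inH}; combined with the decomposition $p^{-1}(U)=\bigcup_{\left<\beta\right>\in p^{-1}(\alpha(1))}\left<\beta,U\right>$ from item~A, this exhibits $U$ as evenly covered. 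As such $U$ cover $X$, the map $p$ is a covering.

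For the converse, let $p:(\widetilde{X},\widetilde{x})\rightarrow(X,x)$ be a connected covering with $p_\#(\pi_1(\widetilde{X},\widetilde{x}))=H$, and let $N$ be the subgroup generated by all $\pi(\alpha,U)$ with $U$ a path-connected evenly covered open set and $\alpha$ arbitrary. Conjugating a generator $\pi(\alpha,U)$ by $[\gamma]$ yields $\pi(\gamma\cdot\alpha,U)$, again a generator, so the generating set is conjugation-invariant and $N$ is normal; and $N\subseteq H$ because a loop in $U$ lifts, through an evenly covered $U$, to a loop, forcing each $\pi(\alpha,U)\subseteq H$. The one substantial point is that $N$ is open. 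Here I would bootstrap off the forward direction: applying the construction to $N$ itself is legitimate, since every generator of $N$ satisfies $\pi(\alpha,U)\subseteq N$, so the same evenly covered $U$ work and produce a covering; this covering is a semicovering whose associated subgroup equals $[c_x^-]N[c_x]=N$ by the standard-lift computation under UPL (Lemma~\ref{UPC}), and Corollary~\ref{B} then forces $N$ to be open. Thus $H$ contains the open normal subgroup $N$. Finally I would note that the assignment descends to conjugacy classes exactly as for Corollary~\ref{B}: equivalence of connected coverings corresponds to conjugacy of subgroups by the lifting classification \cite[2.5.2]{Spanier}, and restricting that bijection to the subclass of coverings gives the stated classification.

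I expect the main obstacle to be precisely the openness of $N$ in the converse. The tempting direct route---arguing that $i_\#\pi_1(U,x)\subseteq N$ is an open neighborhood of the identity---stalls, because a loop merely homotopic into $U$ need not be an interior point of $h^{-1}(N)$ and the quotient topology on $\pi_1(X,x)$ is not a group topology, so one cannot freely translate a single interior point around. The clean resolution is the bootstrap above, letting the covering built from $N$ certify its own openness via Corollary~\ref{B}. Should one prefer to avoid the bootstrap, the alternative is a direct compactness argument in the spirit of Proposition~\ref{open}: cover a lift of a representing loop by finitely many evenly covered charts, and intersect the resulting sets $S([t_i,t_{i+1}],q_{\cdot})$ with transition conditions $S(\{t_i\},p(U_i\cap U_{i+1}))$ to produce a compact-open neighborhood of nearby loops whose classes differ from the given one only by generators of $N$.
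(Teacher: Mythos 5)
Your proposal is correct and takes essentially the same approach as the paper: your forward direction assembles evenly covered neighborhoods from Lemma~\ref{same}, the remark following Lemma~\ref{fibers}, Lemma~\ref{wind}, and Lemma~\ref{inH}, and your converse introduces the normal subgroup $N$ generated by the groups $\pi(\alpha,U)$ over evenly covered $U$ and certifies its openness by applying the construction to $N$ itself and appealing to the semicovering classification (Corollary~\ref{B}). This bootstrap is precisely the paper's own (implicit) argument for why $N$ is open, so the two proofs coincide in all essentials.
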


\begin{corollary}\label{nocov}
There is no covering projection $p:(\widetilde{X},\widetilde{x})\rightarrow (\mathds{H},{\bf 0})$ such that  $p_{\#}(\pi_1(\widetilde{X},\widetilde{x}))=K$.
\end{corollary}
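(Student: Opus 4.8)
The plan is to deduce the statement by contradiction from the core-freeness of $K$ established in Proposition~\ref{normal}, using the classification of covering spaces in Corollary~\ref{T}. Suppose, toward a contradiction, that there were a covering projection $p:(\widetilde{X},\widetilde{x})\rightarrow(\mathds{H},{\bf 0})$ with $p_\#(\pi_1(\widetilde{X},\widetilde{x}))=K$. By Corollary~\ref{T}, the subgroup $K$ would then necessarily contain an open normal subgroup $N$ of $\pi_1(\mathds{H},{\bf 0})$.

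Next I would combine this with Proposition~\ref{normal}. Since $K$ contains no nontrivial normal subgroup of $\pi_1(\mathds{H},{\bf 0})$, the open normal subgroup $N\subseteq K$ must be trivial, i.e.\ $N=\{1\}$. In other words, $\{1\}$ would be an open subgroup of $\pi_1(\mathds{H},{\bf 0})$, forcing the whole group $\pi_1(\mathds{H},{\bf 0})$ to be discrete.

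The remaining---and only non-formal---step is to rule this out. As recalled in the introduction, $\pi_1(X,x)$ is discrete if and only if $X$ is semilocally simply-connected \cite{Calcut}, and the Hawaiian Earring $\mathds{H}$ is the classical example of a space that fails to be semilocally simply-connected at its base point ${\bf 0}$ (every neighborhood of ${\bf 0}$ contains all but finitely many of the circles $C_i$ and hence infinitely many essential loops). Thus $\pi_1(\mathds{H},{\bf 0})$ is not discrete, which contradicts the previous step and completes the proof. I expect this last reduction---traced back to the non-discreteness of $\pi_1(\mathds{H},{\bf 0})$---to be the crux; everything else is a direct application of Corollary~\ref{T} and Proposition~\ref{normal}.

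Should one wish to bypass the full classification, the same conclusion follows by unwinding the construction preceding Corollary~\ref{T}: the subgroup $N$ generated by all $\pi(\alpha,U)$, as $U$ ranges over the path-connected open subsets of $\mathds{H}$ evenly covered by $p$, is normal in $\pi_1(\mathds{H},{\bf 0})$ and contained in $K$. Taking $\alpha$ to be the constant path gives $\pi(\alpha,U)=i_\#(\pi_1(U,{\bf 0}))$ for an evenly covered path-connected $U$ about ${\bf 0}$, and since $\mathds{H}$ is not semilocally simply-connected at ${\bf 0}$, this group is nontrivial. Hence $N\neq\{1\}$ is a nontrivial normal subgroup of $\pi_1(\mathds{H},{\bf 0})$ contained in $K$, again contradicting Proposition~\ref{normal}.
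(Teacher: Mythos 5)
Your proposal is correct and follows essentially the same route as the paper's own proof: contradiction via Corollary~\ref{T}, triviality of $N$ from Proposition~\ref{normal}, and then discreteness of $\pi_1(\mathds{H},{\bf 0})$ ruled out because $\mathds{H}$ is not semilocally simply-connected (citing \cite{Calcut}). Your alternative sketch via the Spanier subgroup generated by the $\pi(\alpha,U)$ is also sound and is in fact the argument the paper itself uses to establish Corollary~\ref{T}, so it amounts to inlining that proof rather than a genuinely different approach.
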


\begin{proof}
Suppose, to the contrary, that there is a covering projection $p:(\widetilde{X},\widetilde{x})\rightarrow (\mathds{H},{\bf 0})$ with $p_{\#}(\pi_1(\widetilde{X},\widetilde{x}))=K$. Then, by Corollary~\ref{T}, there exists an open normal subgroup $N$ of $\pi_1(\mathds{H},{\bf 0})$ with $N\subseteq K$. By Proposition~\ref{normal}, we must have $N=\{1\}$. This implies that $\pi_1(\mathds{H},{\bf 0})$ is discrete.  The latter can only hold if $\mathds{H}$ is semilocally simply-connected \cite[Lemma~3.1]{Calcut}, but it is not.
\end{proof}

\noindent {\bf Acknowledgements}.  This work was partially supported by a grant from the Simons Foundation (\#245042 to Hanspeter Fischer). The authors would also like to thank the referee for comments that helped improve the exposition of the paper.

\end{document}